%
%
%
%
%
%
\RequirePackage{fix-cm}
\documentclass[smallextended]{svjour3}       
\smartqed  
\usepackage{amsmath,amsfonts,mathrsfs}
\usepackage{array,color}
\usepackage{tikz}
\usepackage[caption=false]{subfig}
\usepackage{float}
\usepackage{bm,amssymb}
\newcommand{\be}{\begin{eqnarray}}
\newcommand{\ee}{\end{eqnarray}}
\newcommand{\tv}{\bm{v}}

\usepackage{graphicx}
%
%
%
%
%
\begin{document}

\title{Nonconforming finite element methods of order two and order three for the Stokes flow in three dimensions\thanks{The first two authors were supported by the National Natural Science Foundation of China grants NSFC 12288101.}
}

\titlerunning{Nonconforming elements for the Stokes flow in 3D}        

\author{Wei Chen         \and
           Jun Hu              \and
           Min Zhang
}

\authorrunning{W. Chen, J. Hu, M. Zhang} 

\institute{Wei Chen  \at
              LMAM and School of Mathematical Sciences, Peking University, Beijing 100871, P. R. China \\
              \email{1901110037@pku.edu.cn}          
           \and
           Jun Hu \at
           LMAM and School of Mathematical Sciences, Peking University, Beijing 100871, P. R. China \\
           \email{hujun@math.pku.edu.cn}
            \and
           Min Zhang \at
            LMAM and School of Mathematical Sciences, Peking University, Beijing 100871, P. R. China \\
             \emph{Present address:Computational Science Research Center, Beijing 100193, P. R. China}  \\
            \email{zhangminyy@csrc.ac.cn}
}

\date{Received: date / Accepted: date}

\maketitle

\begin{abstract}
In this study, the nonconforming finite elements of order two and order three are constructed and exploited for the Stokes problem. The moments of order up to $k-1$ ($k=2,3$) on all the facets of the tetrahedron are used for DoFs (degrees of freedom) to construct the unisolvent $k$-order nonconforming finite element with the bubble function space of $P_{k+1}$ explicitly represented. The pair of the $k$-order element and the discontinuous piecewise $P_{k}$ is proved to be stable for solving the Stokes problem with the element-wise divergence-free condition preserved. The main difficulty in establishing the discrete inf-sup condition comes from the fact that the usual Fortin operator can not be constructed. Thanks to the explicit representation of the bubble functions, its divergence space is proved to be identical to the orthogonal complement space of constants with respect to $P_k$ on the tetrahedron, which plays an important role to overcome the aforementioned difficulty and leads to the desirable well-posedness of the discrete problem. Furthermore, a reduced $k$-order nonconforming finite element with a discontinuous piecewise $P_{k-1}$ is designed and proved to be stable for solving the Stokes problem. The lack of the Fortin operator causes difficulty in analyzing the discrete inf-sup condition for the reduced third-order element pair. To deal with this problem, the so-called macro-element technique is adopted with a crucial algebraic result concerning the property of functions in the orthogonal complement space of the divergence of the discrete velocity space with respect to the discrete pressure space on the macro-element. Numerical experiments are provided to validate the theoretical results.
\keywords{Nonconforming finite element method \and Element-wise divergence-free \and Stokes equation \and Discrete inf-sup condition \and Macro-element technique} 
\subclass{65N30 \and  65N15 \and 65N12 \and 35B45}
\end{abstract}

\section{Introduction}
\label{intro}
Assume that $\Omega\subseteq \mathbb{R}^3$ is a bounded, polyhedral domain with Lipschitz boundary $\partial\Omega$. Let $\Gamma_D$ and $\Gamma_N = \partial\Omega \backslash \Gamma_D$ denote the closed Dirichlet boundary and Neumann boundary of $\Omega$ respectively, both with nonzero two-dimensional measures. For $\bm{f} \in \bm{L}^2(\Omega)$, $\bm{u}_D \in \bm{H}^{3/2}(\Gamma_D)$, and $\bm{g} \in \bm{H}^{1/2}(\Gamma_N)$, the three-dimensional Stokes problem with homogeneous Dirichlet boundary conditions seeks $(\bm{u}, p) \in \bm{H}_D^1(\Omega) \times L^2(\Omega),$ such that
\begin{equation}
\label{eq1}
\left\{
\begin{aligned}
	a(\bm{u},\bm{v})+b(\bm{v},p)&=(\bm{f},\bm{v})+\int_{\Gamma_N}\bm{g}\cdot\bm{v}ds &\text{ for all }\bm{v}\in\bm{H}_D^1(\Omega),\\
	b(\bm{u},q)&=0&\text{ for all } q\in L^2(\Omega),
\end{aligned}
\right.
\end{equation}
where
\begin{align*}
	a(\bm{u},\bm{v}):=2\mu\int_\Omega\varepsilon(\bm{u}):\varepsilon(\bm{v})\, d{x},\quad b(\bm{v},q):=-\int_\Omega \operatorname{div}\bm{v}\, q\, d{x},
\end{align*}
and $\bm{H}^1_D(\Omega)$ is the space of all $\bm{H}^1(\Omega)$ functions that vanish on the Dirichlet boundary $\Gamma_D$ in the sense of traces. 
Here $\mu>0$ is the viscosity, $\varepsilon(\bm{u}):=(\nabla \bm{u}+\nabla \bm{u}^{\mathsf{T}})/2$ is the deformation rate tensor, and $\sigma:=(2\mu\varepsilon(\bm{u})-p\bm{I})$ is the symmetric stress tensor with $\bm{I}$ denoting the identity matrix.

A stable finite element pair for solving \eqref{eq1} should satisfy the discrete inf-sup (or Lady\v{z}enskaja-Babu\v{s}ka-Brezzi) condition and the discrete Korn's inequality \cite{brezzi2012mixed,boffi2013mixed}. Besides, the (element-wise) divergence-free velocity approximation is desirable. 
In general, the (element-wise) divergence space of the discrete velocity space is required to be equal to the discrete pressure space in order that both the discrete inf-sup condition and the (element-wise) divergence-free condition could hold.
However, this is usually hard to meet.

The construction of two-dimensional finite element pairs is rather matured, see
\cite{boffi2013mixed,brezzi2012mixed,Scott1985NormEF,Girault1986FiniteEM,Qin2007StabilityAA,Shangyou2008ONTP,Zhang2016StableFE,Chen2017ACS,1994Boffi,2019guz,MR339677,MR1098408,2013Guzma2D}. 
In contrast, only a few studies on the three-dimensional problem are available.
The conforming finite elements for solving \eqref{eq1} include the MINI element \cite{arnold1984stable} 
and the Taylor-Hood element \cite{1997Boffi}. 
Several stable conforming finite element pairs with divergence-free velocity fields are developed on special triangulations \cite{Zhang2005ANF,2016Neilan,Zhang2011QuadraticDF,2011Zhang,2018guz,2022guz}; or constructed by augmenting the velocity space with rational functions on tetrahedral grids \cite{2013Guzma3D}. As for nonconforming finite elements for the Stokes problem,
the Crouzeix-Raviart element \cite{m1973p} does not satisfy the discrete Korn's inequality. To circumvent this, some first-order stable nonconforming finite elements were developed on tetrahedral grids \cite{hu2018two} and on parallelepiped grids \cite{zhangm2017zhangsy} by exploring a discrete velocity space consisting of two conforming components and one nonconforming component.
The construction of nonconforming finite elements of order greater than one is challenging. In \cite{1985F}, a second-order nonconforming element was developed by enriching the quadratic Lagrange element with some extra nonconforming bubble functions. This element is stable and element-wise divergence-free, but the DoFs cannot be defined in the sense of Ciarlet \cite[Definition 3.1.1]{brenner2007mathematical}.
Besides, see \cite{tai2006discrete,zhang2022nonconforming,johnny2012family} for $\bm{H}(\operatorname{div})$ conforming finite elements.

The purpose of this paper is to propose nonconforming finite elements of order two and order three on tetrahedral grids and apply them to solve the Stokes problem \eqref{eq1}. In the spirit of the Crouzeix-Raviart element, the moments of order up to $k-1$ on all the facets of the tetrahedron are defined as DoFs to construct the $k$-order nonconforming finite element for $k=2,3$.
However, the number of these DoFs is greater than the dimension of $P_k(T)$ (polynomials of degree up to $k$ on the tetrahedron $T$). Moreover, there is a bubble function in $P_{k}(T)$ vanishing at all of the DoFs of moments on all the facets of $T$. Therefore, the shape function space has to be taken as $P_{k+1}(T)$. Hence, the bubble function space of $P_{k+1}(T)$ concerning the DoFs of moments on all the facets of $T$ must be characterized.

One main ingredient of this paper is to figure out the bubble function space of $P_{k+1}(T)$ by giving the explicit representation of bubble functions and proposing a direct sum decomposition of $P_{k+1}(T)$. By virtue of this, it is proved that the dimension of the bubble function space is 8 for $k=2$, and 11 for $k=3$. Thus, the dimension of the bubble function space is equal to the dimension of $P_{k+1}(T)$ minus the number of the DoFs of moments on all the facets of $T$. This implies the linear independence of the DoFs of moments on all the facets on $P_{k+1}(T)$. Based on these results, some interior DoFs in $T$ are designed to determine the bubble function space. These interior DoFs and the DoFs of moments on all the facets form a set of DoFs for the shape function space $P_{k+1}(T)$, which is unisolvent and results in a $k$-order nonconforming finite element. Besides, the nonconforming finite element of order $k$ is reduced by getting rid of some interior DoFs while keeping the DoFs of moments on all the facets, which leads to a reduced $k$-order nonconforming finite element. 

Next, the $k$-order nonconforming finite element with the discontinuous piecewise $P_{k}(T)$ is used to solve \eqref{eq1}. It should be noted that the element-wise divergence space of the discrete velocity space is a subspace of the discrete pressure space. Thus the discrete velocity field is element-wise divergence-free. However, it is difficult to establish the discrete inf-sup condition, since the usual Fortin operator \cite[Proposition 2.8]{brezzi2012mixed} can not be constructed for the $k$-order element pair. To overcome this difficulty, the key is to prove the divergence space of the bubble function space is identical to the orthogonal complement space of constants with respect to $P_k(T)$ on the tetrahedron. This and a new technique introduced in \cite{MR3301063} yield the discrete inf-sup condition. The Korn's inequality can be derived immediately by the continuity of the moments of order up to $k-1$ on each inter-element facet in light of the discussion in \cite{brenner2004korn}. Therefore, the well-posedness of the discrete problem is obtained, and the error estimates are proved by the standard mixed finite element theory \cite[Proposition 5.5.6]{boffi2013mixed} and the consistency error estimates \cite[Lemma 3]{m1973p}.

Furthermore, the reduced $k$-order nonconforming finite element with the discontinuous piecewise $P_{k-1}(T)$ is proved to be stable for solving \eqref{eq1}. It has fewer DoFs while retaining the same convergence rate. For the reduced third-order element pair, the discrete inf-sup condition is hard to establish due to the lack of the Fortin operator \cite[Proposition 2.8]{brezzi2012mixed}. To overcome this difficulty, the macro-element technique introduced in \cite{stenberg1984analysis} is adopted with a crucial algebraic result concerning the the property of functions in the orthogonal complement space of the divergence of the discrete velocity space with respect to the discrete pressure space on the macro-element. In fact, for the function therein, the values at the mid-points are proved to be equal on two opposite edges of any tetrahedron in the macro-element. This plus some specially defined bubble functions yields the discrete inf-sup condition. In addition, the discrete Korn's inequality is from \cite{brenner2004korn}. The well-posedness of the discrete problem and error estimates are obtained in the routine way.

The rest of the paper is organized as follows. Some preliminaries are introduced in Section 2. In Section 3, the bubble function space is characterized first, and based on this, the nonconforming and reduced nonconforming finite elements of order two and order three are constructed. The four newly proposed elements are exploited to solve the Stokes problem in Section 4. In Section 5, the discrete inf-sup condition is established. The well-posedness of the discrete problem and error estimates are presented in Section 6. Finally, the numerical experiments are performed in Section 7 to validate the previous results.

\section{Preliminaries}
Suppose $\Omega$ is subdivided by a family of quasi-uniform tetrahedral grids $\{\mathcal{T}_{h}\}$ with grid size $h$. Let $\mathcal{F}_h(\Omega)$, $\mathcal{V}_h(\Omega)$ and $\mathcal{F}_h(\Gamma_D)$, represent the set of interior faces of $\mathcal{T}_h$, the set of internal vertices of $\mathcal{T}_h$, and the set of faces on $\Gamma_D$, respectively.
Given any face $F\in\mathcal{F}_h(\Omega)$ with diameter $h_F$, a fixed outward unit normal $\mathbf{n}_F$ is assigned. Then there exist tetrahedrons $T_-$ and $T_+$ in $\mathcal{T}_h$ such that $F=T_+\cap T_-$. For a $\mathbb{R}^d$-valued function $\bm{v}$ with $d\in \{ 1,2,3\}$, define 
\begin{align*}
    [\![\bm{v}]\!]:=({\bm{v}}|_{T_{+}})|_{F}-({\bm{v}}|_{T_{-}})|_{F},\quad \{\!\!\{\bm{v}\}\!\!\}:=\frac{({\bm{v}}|_{T_{+}})|_{F}+({\bm{v}}|_{T_{-}})|_{F}}{2}. 
\end{align*}
The jump $[\![\bm{v}]\!]$ and the average $\{\!\!\{\bm{v}\}\!\!\}$ across $F$ reduce to the traces when $F$ lies on the boundary.

Given a tetrahedron $T\in \mathcal{T}_{h}$, the set of all faces and all vertices of $T$ are denoted by $\mathcal{F}(T)$ and $\mathcal{V}(T)$, respectively. Let $a_i\in\mathcal{V}(T)$ be the vertices of $T$, $i=1,2,3,4$. Let $F_i\in \mathcal{F}(T)$ be the face opposite to $a_i$. Let $\lambda_i$ denote the $i$-th barycentric coordinate of $T$. Let $\vert{F_i}\vert$ and $\vert{T}\vert$ be the area of the face $F_i$ and the volume of the tetrahedron $T$, respectively. 

Throughout this paper, an inequality $\alpha \lesssim \beta$ replaces $\alpha\leq c\beta$ with the multiplicative mesh-size independent constant $c>0$, which depends on $\Omega$ only.
Standard notations for Hilbert spaces $H^m(\Omega)$ and their associated norms $\|\cdot\|_m$ and seminorms $|\cdot|_m$ are employed. 
For a subdomain $S\subseteq \Omega$, $(.,.)_{S}$ denotes the $L^2$ scalar product over $S$, and $\Vert{\cdot}\Vert_{m,S}$ denotes the $H^m(S)$ norm over $S$. Let $(\cdot,\cdot)$ and $\Vert{\cdot}\Vert_m$ abbreviate $(\cdot,\cdot)_{\Omega}$ and $\Vert{\cdot}\Vert_{m,\Omega}$, respectively. For an integer $k\geq 0$, let $P_k(S)$ denote the set of all polynomials on $S$ with degree no more than $k$.
If there is no special instruction, the boldface letter will indicate a vector or vector space in order to distinguish it from scalars, for instance, $\bm{P}_k(S)=P_k(S)\times P_k(S)\times P_k(S)$.

For piecewise functions in nonconforming spaces, let $\operatorname{div}_h$ denote the element-wise divergence operator.

\subsection{{$M_{k-1}$}-continuity} 
The construction of the nonconforming finite element is based on the following assumptions: 

\noindent(H)\, The $M_{k-1}$-continuous condition: For any $\mathbb{R}^d$-valued piecewise polynomial function $\bm{v}$ with $d\in\{1,2, 3\}$, the moments
\begin{equation*}
\int_{F}[\![\bm{v}]\!]\cdot \bm{q}\, ds=0\quad \text{for all}~~\bm{q} \in \bm{P}_{k-1}(F)\, \, \text{and}\,\, F\in \mathcal{F}_{h}(\Omega).
\end{equation*}
\noindent(H\,$^{\prime}$)\, The $M_{k-1}$-zero condition: For any $\mathbb{R}^d$-valued piecewise polynomial function $\bm{v}$ with $d\in\{1,2, 3\}$, the moments 
\begin{equation*}
\int_{F}\bm{v}\cdot\bm{q}\, ds=0\quad \text{for all}~~\bm{q} \in \bm{P}_{k-1}(F)\, \, \text{and}\,\, F\in \mathcal{F}_{h}(\Gamma_{D}).
\end{equation*}
\section{The construction of the nonconforming finite elements} 
Given a tetrahedron $T\in\mathcal{T}_h$, the moments of order up to $k-1$ on each face $F\in\mathcal{F}(T)$ are defined as DoFs to ensure (H) for the $k$-order($k=2,3$) nonconforming finite element. However, the number of these DoFs is greater than the dimension of $P_k(T)$. In addition, there is a bubble function in $P_{k}(T)$ vanishing at the DoFs of moments on all the facets of $T$. Thus, the shape function space has to be taken as $P_{k+1}(T)$. Hence, the bubble function space of $P_{k+1}(T)$ is characterized first in this section. 
Based on this, some interior DoFs in $T$ are designed to determine the nontrivial bubble functions, which results in the $k$-order nonconforming finite element in the second part of this section. Besides, the element is reduced by getting rid of some interior DoFs on the tetrahedron, while keeping the DoFs of moments on all the facets, which leads to the reduced $k$-order nonconforming finite element in the last part of this section.

\subsection{The characterization of the bubble function space} The bubble function space of $P_{k+1}(T)$ concerning the DoFs of moments on all the facets of $T$ is characterized in this subsection. Specifically, its dimension is figured out, the explicit expression of the bubble functions is given, and a direct sum decomposition of $P_{k+1}(T)$ is proposed. This is very important for the construction in subsequent subsections.

First off, the bubble function space of $P_{k+1}(T)$ is defined by
\begin{align}
	\label{P3bubblespace}
	\mathfrak{B}_{k+1}(T):=\left\{v\in P_{k+1}(T)~|~ \int_F vq~ds=0\text{ for all }q\in P_{k-1}(F), F\in \mathcal{F}(T)\right\}.
\end{align}
Then the dimension of $\mathfrak{B}_{k+1}(T)$ is figured out in the following lemma. The proof is not easy, but the result plays a prominent part in the construction.

\begin{lemma}\label{dimB3--B4}
	There holds
	\begin{align}
&\operatorname{dim}\mathfrak{B}_3(T)=8, \quad \operatorname{dim}\mathfrak{B}_4(T)=11.\label{dimB4}
	\end{align}
\end{lemma}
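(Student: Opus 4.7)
\emph{Plan.} The strategy is to realize $\mathfrak{B}_{k+1}(T)$ as the kernel of the face-moment evaluation map
\[
\Phi_k:P_{k+1}(T)\longrightarrow \bigoplus_{F\in\mathcal{F}(T)}P_{k-1}(F)^{*},\qquad v\mapsto \Bigl(q\mapsto \int_F v\,q\,ds\Bigr)_{F},
\]
and to pin down its rank from both sides. The codomain has dimension $4\dim P_{k-1}(F)$, which is $12$ for $k=2$ and $24$ for $k=3$, while $\dim P_{k+1}(T)=20$ and $35$ respectively. If I can show that $\Phi_k$ is surjective, rank-nullity delivers $\dim\mathfrak{B}_{k+1}(T)=20-12=8$ and $35-24=11$ as claimed.

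\emph{Easy direction (lower bound).} By rank-nullity, $\dim\mathfrak{B}_{k+1}(T)=\dim\ker\Phi_k\ge \dim P_{k+1}(T)-4\dim P_{k-1}(F)$, which immediately yields $\ge 8$ and $\ge 11$ in the two cases. This requires no clever construction.

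\emph{Hard direction (upper bound / surjectivity of $\Phi_k$).} I would prove that the face-moment functionals are linearly independent as functionals on $P_{k+1}(T)$ by constructing, for each face $F_i$ and each $q\in P_{k-1}(F_i)$, a test polynomial $v\in P_{k+1}(T)$ realizing a prescribed row of the DoF matrix. The natural first candidate is the ``three-face-vanishing'' family $\lambda_j\lambda_k\lambda_l\cdot r$ with $\{j,k,l\}=\{1,2,3,4\}\setminus\{i\}$ and $r\in P_{k-2}(T)$: these vanish on $F_j,F_k,F_l$ and so isolate the contribution on $F_i$. For $k=2$ this gives only $4$ such test polynomials ($r$ a constant, one per face) and for $k=3$ only $4\cdot 4=16$, short of the $12$ and $24$ DoFs. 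The remaining directions must be realized by polynomials that are simultaneously nonzero on several faces, which is why the moments on different faces are genuinely coupled through the shared edges and vertices.

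\emph{Main obstacle and remedy.} The coupling described above is the real obstruction; it is precisely the point where a naive dimension count would fail if some moment were a consequence of the others. I expect the cleanest resolution to come from an explicit direct sum decomposition $P_{k+1}(T)=\mathfrak{B}_{k+1}(T)\oplus U_k$ with $\dim U_k=4\dim P_{k-1}(F)$, where $U_k$ is built from the ``three-face-vanishing'' functions above supplemented by a carefully chosen set of ``edge-supported'' polynomials (for instance, products $\lambda_j\lambda_k\cdot s$ vanishing on $F_j$ and $F_k$ and correcting the cross-face moments). Injectivity of $\Phi_k$ on $U_k$ then reduces to positivity/invertibility of a block Gram-type matrix whose diagonal blocks are the Gram matrices of $P_{k-1}(F)$ against the face bubbles on each $F$, and whose off-diagonal blocks encode the small edge-coupling corrections; these are non-singular by non-degeneracy of the $L^2$-pairing on each face and a perturbation argument. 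Exploiting the $S_4$-symmetry of the barycentric construction would further cut the verification to a single representative face, which is the route I would take to keep the calculation tractable, especially for $k=3$ where the $11$-dimensional bubble space is expected to contain the canonical interior bubble $\lambda_1\lambda_2\lambda_3\lambda_4$ together with $10$ further explicit generators arising from the decomposition.
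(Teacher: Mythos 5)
Your reduction is sound and structurally matches the paper's: writing $\mathfrak{B}_{k+1}(T)=\ker\Phi_k$, getting the lower bounds $\ge 8$ and $\ge 11$ for free from rank--nullity, and reducing the upper bound to surjectivity of $\Phi_k$ via a complement $U_k$ with $\dim U_k=12$ (resp.\ $24$) on which $\Phi_k$ is injective is exactly the role played by the paper's spaces $\mathfrak{O}_3(T)=\operatorname{span}\{\lambda_i^2\lambda_j,\lambda_i\lambda_j^2\}$ and $\mathfrak{O}_4(T)$. (The paper instead proves the lower bound by exhibiting the eight, resp.\ eleven, bubbles explicitly --- overkill for the dimension count, but those explicit formulas are reused later for $\operatorname{div}\bm{\mathfrak{B}}_{k+1}(T)=P_{0,k}^\perp(T)$, so your cheaper lower bound is fine for this lemma in isolation.)

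The genuine gap is the upper bound: you correctly identify the cross-face coupling as ``the real obstruction'' but then do not overcome it. You neither fix a concrete $U_k$ nor verify injectivity of $\Phi_k$ on it; the proposed ``positivity/invertibility of a block Gram-type matrix \dots\ by non-degeneracy of the $L^2$-pairing on each face and a perturbation argument'' is not a proof. There is no small parameter here: the off-diagonal blocks coming from edge-supported functions such as $\lambda_i^2\lambda_j$ (nonzero on the two faces $F_k,F_l$) are fixed $O(1)$ rational numbers, so invertibility of the diagonal blocks plus ``perturbation'' establishes nothing. The matrix simply has to be shown nonsingular by an explicit computation, which is what the paper does: for each coefficient it produces an explicit integer combination of the moment functionals $\mathscr{F}_{i,j}(v)=\frac{1}{|F_i|}\int_{F_i}v\lambda_j\,ds$ (e.g.\ $\mathscr{F}=\mathscr{F}_{1,3}-5\mathscr{F}_{1,2}+\cdots$) that evaluates to a nonzero multiple of $c_{1,2}$ on $\mathfrak{O}_3(T)$, forcing $\mathfrak{O}_3(T)\cap\mathfrak{B}_3(T)=\{0\}$. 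Until you either exhibit such annihilating combinations or compute the relevant $12\times 12$ and $24\times 24$ determinants for a concrete $U_k$, the claimed equalities $\dim\mathfrak{B}_3(T)=8$ and $\dim\mathfrak{B}_4(T)=11$ remain only one-sided bounds.
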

\begin{proof}
	For $k=2$, define
	\begin{align}
		&\phi_i = 5\lambda_i^3-5\lambda_i^2+\lambda_i,\quad i = 1,2, 3, 4,\label{B3bb1}\\
		&\varphi_i = -2\lambda_i^2+4\lambda_i-2 + 3\sum_{j=1, j\neq i}^4\lambda_i^2+30\prod_{j=1,j\neq i}^4\lambda_j,\quad i = 1,2, 3, 4.\label{B3bb2}
	\end{align}
These eight functions are in $\mathfrak{B}_3(T)$ and linearly independent. Thus 
 \be\label{dim:2:geq}
 \operatorname{dim} \mathfrak{B}_{3}(T)\geq 8.
 \ee
	On the other hand, define 
	\be\label{orth:mathbbO:3}
	\mathfrak{O}_3(T):=\text{span}\{\lambda_i^2\lambda_j, \lambda_i\lambda_j^2 ~|~ 1\leq i<j \leq 4\}.
	\ee
	It can be verified that $\operatorname{dim}\mathfrak{O}_3(T)=12$ and $\mathfrak{O}_3(T)\subseteq P_3(T)$. Furthermore, $\mathfrak{O}_3(T)\cap\mathfrak{B}_3(T)=\{0\}$.  To prove this, given $v\in \mathfrak{O}_3(T)\cap \mathfrak{B}_3(T)$, then 
	\begin{equation}\label{2:v:form}
	v=\sum_{1\leq i< j\leq 4}\left(c_{i,j}\lambda_i^2\lambda_j+d_{i,j}\lambda_{i}\lambda_j^2\right) 
	\end{equation} 
	with parameters $c_{i,j}$ and $d_{i,j}$, and
 \be
	\label{12uni}
	\mathscr{F}_{i,j}(v):=\frac{1}{\vert{F_i}\vert}\int_{F_i} v \lambda_j d{s}=0, \quad i, j\in \{1, 2, 3, 4\},\,  i\neq j.
	\ee
	Let $\mathscr{F}$ be the linear functional of the following form:
	\be\label{functional:2nd:1}
	\begin{aligned}
	\mathscr{F}:&=\mathscr{F}_{1,3}-5\mathscr{F}_{1,2}+\mathscr{F}_{1,4}-2\mathscr{F}_{2,3}-2\mathscr{F}_{2,4}+10\mathscr{F}_{2,1}\\
		&\quad +7\mathscr{F}_{3,2}-11\mathscr{F}_{3,1} +\mathscr{F}_{3,4}+ 7\mathscr{F}_{4,2}-11\mathscr{F}_{4,1}+\mathscr{F}_{4,3}.
	\end{aligned}
	\ee
	Substituting \eqref{2:v:form} into \eqref{functional:2nd:1} brings
	$0=\mathscr{F}(v)=-\frac{1}{5}c_{1,2}$. Thus, $c_{1,2}=0$. Similar arguments lead to $c_{i,j}=d_{i,j}=0$ for all $1\leq i<j\leq 4$. This shows $v=0$. Therefore,
	\begin{align*}
		\dim \mathfrak{B}_3(T)&\leq \dim P_3(T)-\dim \mathfrak{O}_3(T)= 20 - 12=8.
	\end{align*}
	This plus \eqref{dim:2:geq} yields $\operatorname{dim}\mathfrak{B}_{3}(T)=8$.
	
	For $k=3$, proceeding as the proof of the case $k=2$, it can be verified that
	\begin{align}
		&\phi_i = 14\lambda_i^4-21\lambda_i^3+9\lambda_i^2-\lambda_i,~~i = 1, 2, 3, 4,\label{B4bb1}\\
		&\phi =\lambda_1\lambda_2\lambda_3\lambda_4,\label{B4bb3}
	\end{align}
	and
	\begin{align}
 \textcolor{black}{\varphi_{ij}} &= 28(\lambda_i+\lambda_j)^4-53(\lambda_i+\lambda_j)^3+27(\lambda_i+\lambda_j)^2-18\lambda_i\lambda_j + 3\lambda_k\lambda_l\notag\\ 
		&\qquad + (\lambda_i+\lambda_j)\left(21\lambda_i\lambda_j - 2 - 3(\lambda_k+\lambda_l)^2 - 21\lambda_k\lambda_l\right)\label{B4bb2}
	\end{align}
	with distinct integers $i,j,k,l$ in $\{1,2,3,4\}$ satisfying $k<l$ and $i<j$, are linearly independent, and these eleven bubble functions belong to $\mathfrak{B}_4(T)$. Therefore,
	\be
	\label{dimB4geq11}
	\dim \mathfrak{B}_4(T)\geq 11.
	\ee
Furthermore, define
	\be\label{orth:mathbbO:4}
	\begin{aligned}
		\mathfrak{O}_4(T):=\text{span}\{\lambda_i^3\lambda_j,&\lambda_i\lambda_j^3~|~1\leq i<j\leq 4\}\oplus\\
		&\text{span}\{\lambda_i^2\lambda_j\lambda_k,\lambda_i\lambda_j^2\lambda_k,\lambda_i\lambda_j\lambda_k^2~|~ 1\leq i<j<k\leq 4\}.
	\end{aligned}
	\ee
	Note that $\mathfrak{O}_4(T)\subseteq P_4(T)$ and $\operatorname{dim}\mathfrak{O}_4(T)=24$. The similar technique as used in the case $k=2$ shows $\mathfrak{B}_4(T)\cap\mathfrak{O}_4(T)=\{0\}$. This and \eqref{dimB4geq11} result in $\operatorname{dim}\mathfrak{B}_4(T)= 11$.
\end{proof}
\begin{remark}
The explicit representation of eight linearly independent bubble functions in $\mathfrak{B}_3(T)$ is given by \eqref{B3bb1}--\eqref{B3bb2}. Eleven linearly independent bubble functions in $\mathfrak{B}_4(T)$ are presented in \eqref{B4bb1}--\eqref{B4bb2}.  
\end{remark}
In fact, the proof of Lemma \ref{dimB3--B4} implies the following direct sum composition of $P_{k+1}(T)$:
\begin{align}
	P_{k+1}(T)=\mathfrak{B}_{k+1}(T)\oplus \mathfrak{O}_{k+1}(T)\quad \text{for}\, \, k=2, 3.\label{directsum}
\end{align}
Furthermore, note that the number of the DoFs of moments on all the facets of $T$ is $2k(k+1)$, which is equal to $\operatorname{dim}\mathfrak{O}_{k+1}$ for $k=2,3$. Therefore, the following lemma can be derived immediately.
\begin{lemma}\label{ifandonlyiflm}
For $k=2, 3$, it holds that
$\operatorname{dim} P_{k+1}(T) = \operatorname{dim}\mathfrak{B}_{k+1}(T) + 2k(k+1)$.
\end{lemma}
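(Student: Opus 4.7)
The plan is to deduce this lemma as an immediate consequence of the direct sum decomposition \eqref{directsum} and the dimension counts already established in the proof of Lemma \ref{dimB3--B4}. Nothing new needs to be computed about bubble functions; the statement is really a counting identity.

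First I would verify that the number of DoFs of moments on all facets of $T$ really equals $2k(k+1)$. Each tetrahedron has four faces $F_i$, and on each face the moment DoFs run over a basis of $P_{k-1}(F)$, which has dimension $\binom{k+1}{2}=k(k+1)/2$. Summing over the four faces gives $4\cdot k(k+1)/2 = 2k(k+1)$, matching $12$ for $k=2$ and $24$ for $k=3$. Next I would recall from \eqref{orth:mathbbO:3} and \eqref{orth:mathbbO:4} that $\dim \mathfrak{O}_3(T)=12$ and $\dim \mathfrak{O}_4(T)=24$, so in both cases $\dim \mathfrak{O}_{k+1}(T)=2k(k+1)$.

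Finally, invoking the direct sum decomposition \eqref{directsum}, namely $P_{k+1}(T)=\mathfrak{B}_{k+1}(T)\oplus \mathfrak{O}_{k+1}(T)$ for $k=2,3$, one obtains
\begin{equation*}
\dim P_{k+1}(T) = \dim \mathfrak{B}_{k+1}(T) + \dim \mathfrak{O}_{k+1}(T) = \dim \mathfrak{B}_{k+1}(T) + 2k(k+1),
\end{equation*}
which is the claim. No part of this argument is delicate: the substantive work (verifying that the sum is direct and computing the summand dimensions) has already been carried out in Lemma \ref{dimB3--B4}. The only step that warrants attention is confirming the arithmetic identity $\dim \mathfrak{O}_{k+1}(T)=2k(k+1)$, and this is precisely the observation the authors flag immediately before the lemma statement, so the proof is essentially a one-line deduction.
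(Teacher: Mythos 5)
Your proposal is correct and follows exactly the paper's own route: the authors also derive the lemma immediately from the direct sum decomposition \eqref{directsum} together with the observation that the number of facet moment DoFs, $4\dim P_{k-1}(F)=2k(k+1)$, coincides with $\dim\mathfrak{O}_{k+1}(T)$ ($12$ for $k=2$, $24$ for $k=3$). Nothing is missing; the counting is the whole proof.
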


\subsection{The nonconforming finite elements of order two and order three}
The characterization of the bubble function space enables us to construct the $k$-order nonconforming finite element in this subsection for $k=2, 3$. 
On the tetrahedron $T\in \mathcal{T}_h$, let the shape function space be $P_{k+1}(T)$. For $v\in P_{k+1}(T)$, the DoFs are given by 
\begin{align}   
	& \int_F vq~ds, &&\text{ for all } q\in P_{k-1}(F), F\in\mathcal{F}(T),\label{facemomentP3}\\ 
	&\int_T vq~dx, &&\text{ for all } q\in \mathfrak{B}_{k+1}(T). \label{momentP3}
\end{align}
\begin{theorem}
Any function $v\in P_{k+1}(T)$ can be uniquely determined by DoFs \eqref{facemomentP3}--\eqref{momentP3}.
\end{theorem}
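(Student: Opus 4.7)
The plan is a standard dimension-count plus injectivity argument, made to work precisely because Lemma \ref{ifandonlyiflm} matches the DoF count to $\dim P_{k+1}(T)$.

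First I would verify that the total number of DoFs equals $\dim P_{k+1}(T)$. The face moments in \eqref{facemomentP3} contribute $4\cdot\dim P_{k-1}(F) = 4\cdot\tfrac{k(k+1)}{2} = 2k(k+1)$ conditions, and the interior moments in \eqref{momentP3} contribute $\dim\mathfrak{B}_{k+1}(T)$ conditions. By Lemma \ref{ifandonlyiflm}, their sum equals $\dim P_{k+1}(T)$ (that is, $12+8=20$ for $k=2$ and $24+11=35$ for $k=3$). Hence it suffices to prove that the linear map sending $v\in P_{k+1}(T)$ to the vector of its DoFs is injective, i.e.\ that the vanishing of all DoFs \eqref{facemomentP3}--\eqref{momentP3} forces $v=0$.

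Next I would carry out the injectivity step. Suppose $v\in P_{k+1}(T)$ satisfies $\int_F vq\,ds=0$ for every $q\in P_{k-1}(F)$ and every face $F\in\mathcal{F}(T)$. By the very definition \eqref{P3bubblespace} of the bubble space, this means $v\in\mathfrak{B}_{k+1}(T)$. Assume additionally that $\int_T vq\,dx=0$ for every $q\in\mathfrak{B}_{k+1}(T)$; since $v$ itself now lies in $\mathfrak{B}_{k+1}(T)$, it is a legitimate test function, and choosing $q=v$ yields $\int_T v^2\,dx=0$, hence $v\equiv 0$ on $T$.

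Combining the dimension count with the injectivity just established, the unisolvence is immediate by the rank-nullity theorem: the map from $P_{k+1}(T)$ to the space of DoF values is a linear map between spaces of equal (finite) dimension with trivial kernel, and is therefore a bijection. Thus each $v\in P_{k+1}(T)$ is uniquely determined by the data \eqref{facemomentP3}--\eqref{momentP3}. There is no real obstacle here; all the difficulty of the construction has already been absorbed into Lemma \ref{dimB3--B4} and the direct sum decomposition \eqref{directsum}, which together identify the exact dimension of $\mathfrak{B}_{k+1}(T)$ and make the present theorem a one-line consequence of bubble-space self-duality.
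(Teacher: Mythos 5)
Your argument is correct and is essentially identical to the paper's own proof: both count DoFs via Lemma \ref{ifandonlyiflm} to reduce unisolvence to injectivity, observe that vanishing face moments place $v$ in $\mathfrak{B}_{k+1}(T)$, and then use $v$ itself as a test function in \eqref{momentP3} to conclude $v=0$. No differences worth noting.
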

\begin{proof}
For $k=2, 3$, the dimension of $P_{k+1}(T)$ is equally the the number of the DoFs \eqref{facemomentP3}--\eqref{momentP3} in light of  Lemma \ref{ifandonlyiflm}.
It suffices to prove $v=0$ provided that \eqref{facemomentP3}--\eqref{momentP3} vanish. The vanishing of \eqref{facemomentP3} shows $v\in \mathfrak{B}_{k+1}(T)$, then \eqref{momentP3} concludes the proof.
\end{proof}
The global space for the $k$-order nonconforming finite element is defined by
\begin{align}
V_{h,k+1}:= \{v\in L^2(\Omega)|\,   v|_T\in P_{k+1}(T) \text{ for all } T\in\mathcal{T}_h, v~\text{is $M_{k-1}$-continuous}\}.\label{H2n}
\end{align}

\subsection{The reduced nonconforming finite elements of order two and order three}
For $k=2, 3$, the aforementioned $k$-order nonconforming finite element is reduced by getting rid of some interior DoFs on the tetrahedron, while keeping the DoFs of moments on all the facets for the $M_{k-1}$-continuity. 
Equivalently, a reduced shape function space, which is denoted by $P_{k+1}^-(T)$, is obtained by by enriching $P_{k}(T)$ with some $k+1$-order functions.
The main issue is how many and which $k+1$-order functions to choose to enrich $P_{k}(T)$.

To begin with, the explicit expression of the bubble function in $P_{k}(T)$  is presented in the following lemma. The proof is simple, while the result is important for determining the number of supplemental polynomials enriched on $P_{k}(T)$.

\begin{lemma}\label{mark3.4}
For $k=2, 3$, there exists the bubble function
\begin{align}
    &\mathfrak{b}_{2}:=2-4\sum_{i=1}^4\lambda_i^2~~\text{in}~~P_2(T),\label{bubulep2}\\
    &\mathfrak{b}_{3}:=\sum_{i=1}^4\left(9\lambda_i^2-11\lambda_i^3\right)+72\sum_{1\leq i< j< k\leq 4}\lambda_i\lambda_j\lambda_k~~~\text{in}~~P_3(T),\label{bubulep3}
\end{align}
such that the moments of order up to $k-1$ of $\mathfrak{b}_k$ on all the facets of the tetrahedron $T\in\mathcal{T}_h$ vanish, and the dimension of the bubble function space of $P_k(T)$ is one. 
\end{lemma}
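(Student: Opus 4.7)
The plan is to verify the claim in two stages, mirroring the structure of the proof of Lemma~\ref{dimB3--B4}.

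The first stage establishes $\dim \geq 1$ by checking directly that $\mathfrak{b}_2$ and $\mathfrak{b}_3$ satisfy the vanishing moment conditions. Since $\mathfrak{b}_k$ is fully symmetric under every permutation of $(\lambda_1, \lambda_2, \lambda_3, \lambda_4)$ and the DoFs are likewise distributed symmetrically across the four faces, it suffices to verify $\int_F \mathfrak{b}_k\, q\, ds = 0$ for all $q \in P_{k-1}(F)$ on a single face, say $F_4 = \{\lambda_4 = 0\}$. The restriction $\mathfrak{b}_k|_{F_4}$ is then a polynomial in $\lambda_1, \lambda_2, \lambda_3$, and the standard triangle integration formula
\begin{align*}
\int_F \lambda_1^{a}\lambda_2^{b}\lambda_3^{c}\, ds \;=\; \frac{2\,a!\,b!\,c!\,|F|}{(a+b+c+2)!}
\end{align*}
reduces every required moment to rational arithmetic. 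The remaining $S_3$-symmetry in $(\lambda_1,\lambda_2, \lambda_3)$ further shrinks the list of distinct checks to one per symmetric orbit of test monomials: $\{1, \lambda_1\}$ for $k=2$, and $\{1, \lambda_1, \lambda_1^2, \lambda_1\lambda_2\}$ for $k=3$.

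The second stage establishes $\dim \leq 1$ by exhibiting an explicit complementary subspace $\widetilde{\mathfrak{O}}_k(T) \subset P_k(T)$ of dimension $\dim P_k(T) - 1$ — namely $9$ for $k=2$ and $19$ for $k=3$ — spanned by selected monomials $\lambda_i^{a}\lambda_j^{b}$ (for $k=2$) or $\lambda_i^{a}\lambda_j^{b}\lambda_\ell^{c}$ (for $k=3$), and showing that $\widetilde{\mathfrak{O}}_k(T)$ intersects the bubble space of $P_k(T)$ trivially. Injectivity of the face moment functionals on $\widetilde{\mathfrak{O}}_k(T)$ would be obtained by constructing, for each candidate coefficient in the expansion of a generic element of $\widetilde{\mathfrak{O}}_k(T)$, a linear combination of the individual moment functionals $\mathscr{F}_{i,j}$ (or $\mathscr{F}_{i,j,k}$ for $k=3$) analogous to \eqref{functional:2nd:1} that isolates that coefficient. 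A standard dimension count combined with the lower bound from the first stage then yields the desired equality.

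The main obstacle is the second stage: selecting a workable complement $\widetilde{\mathfrak{O}}_k(T)$ and matching linear combinations of face moment functionals so that every coefficient can be isolated. The first stage, though computational, is mechanical and kept small by symmetry; the second demands more care, but the template supplied by Lemma~\ref{dimB3--B4} and the explicit functional in \eqref{functional:2nd:1} makes the route concrete.
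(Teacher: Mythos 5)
Your two-stage plan is the natural one and matches the template of the paper's proof of Lemma~\ref{dimB3--B4}; the paper itself gives no proof of this lemma, so there is nothing more specific to compare against. However, actually running your Stage~1 exposes a genuine problem: the function in \eqref{bubulep3} as printed is \emph{not} a bubble function. On $F_4=\{\lambda_4=0\}$ your integration formula gives
\begin{equation*}
\frac{1}{|F_4|}\int_{F_4}\mathfrak{b}_3\,ds
=9\cdot 3\cdot\tfrac16-11\cdot 3\cdot\tfrac1{10}+72\cdot\tfrac1{60}
=\tfrac{270-198+72}{60}=\tfrac{12}{5}\neq 0,
\end{equation*}
so already the zeroth-order moment fails. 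The coefficient of the triple-product sum must be $-72$, not $+72$; with that sign all four orbit representatives $1,\lambda_1,\lambda_1^2,\lambda_1\lambda_2$ do give zero (e.g.\ $9\cdot\tfrac{4}{45}-11\cdot\tfrac{2}{35}-72\cdot\tfrac{1}{420}=0$). A verification plan must actually be executed: as written, your Stage~1 refutes rather than confirms the displayed formula, and no proof of the statement as printed is possible.

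Two further cautions on Stage~2, which you leave entirely unexecuted for the hard case $k=3$. First, monomials $\lambda_i^a\lambda_j^b\lambda_\ell^c$ of total degree at most $k$ are \emph{not} linearly independent as functions on $T$, because $\sum_i\lambda_i=1$; in particular Newton's identity yields $2\sum_i\lambda_i^3=3\sum_i\lambda_i^2-1+6\sum_{i<j<k}\lambda_i\lambda_j\lambda_k$ on $T$. If you count coefficients in such a redundant spanning set you will overcount the kernel of the moment map (the two ``independent'' symmetric cubic bubbles one naively finds this way collapse to a single function under that identity), so $\widetilde{\mathfrak{O}}_k(T)$ must be drawn from a genuine basis of $P_k(T)$, e.g.\ the monomials of exact degree $k$. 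Second, the upper bound for $k=3$ requires exhibiting $19$ combinations of the $24$ face-moment functionals, each isolating one coefficient, in the spirit of \eqref{functional:2nd:1}; this is the entire content of the $\dim\le 1$ half and is not supplied. The skeleton of the argument is sound, but Stage~1 fails outright for the stated $\mathfrak{b}_3$ and Stage~2 remains a plan rather than a proof.
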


In light of Lemma \ref{mark3.4}, for $P_2(T)$, there is a bubble function $\mathfrak{b}_2$ vanishing at all of the 12 DoFs of moments on all the facets of $T$. This implies the dimension of $P_{3}^-(T)$ should be no less than 13. Therefore, to obtain $P_3^-(T)$, at least three linearly independent cubic functions should be enriched on $P_2(T)$. Similarly, for $P_3(T)$, there is a bubble function $\mathfrak{b}_3$ vanishing at all of the 24 DoFs on all the facets of $T$. Thus, the dimension of $P_{4}^-(T)$ should be no less than 25. Therefore, to obtain $P_4^-(T)$, at least five linearly independent quatic functions should be enriched on $P_3(T)$.

Then motivated by the direct sum decomposition \eqref{directsum}, the shape function spaces are defined by 
\begin{align}
    &P_3^-(T):=P_2(T)+\text{span}\left\{\lambda_1^2\lambda_2,~\lambda_2^2\lambda_3,~\lambda_3^2\lambda_1\right\},\label{shape:1}\\
    &P_4^-(T):=P_3(T)+\text{span}\left\{\lambda_1^2\lambda_3\lambda_4,~\lambda_2^2\lambda_4\lambda_1,~\lambda_3^2\lambda_1\lambda_2,~\lambda_4^2\lambda_2\lambda_3,~\lambda_1\lambda_2^3\right\}.\label{shape:2}
\end{align}

\begin{remark}
For $P_{k+1}^-(T)$ with $k=2, 3$, the $k+1$-order supplemental polynomials enriched on $P_{k}(T)$ are selected from $\mathfrak{O}_{k+1}(T)$ in \eqref{directsum} directly. It should be noted that the choice is not unique. In principle, if there is no bubble function except $\mathfrak{b}_k$ in $P^-_{k+1}(T)$, $P^-_{k+1}(T)$ is workable.
\end{remark}

For $k=2, 3$, on the tetrahedron $T\in \mathcal{T}_h$, given any function $v\in P_{k+1}^-(T)$, the DoFs of the reduced nonconforming finite element are given by 
\begin{align}
	& \int_F vq~ds, &&\text{ for all } q\in P_{k-1}(F), F\in\mathcal{F}(T),\label{P2elemdofa}\\ 
	&\int_T v~dx. &&~ \label{P2elemdofb}
\end{align}

\begin{theorem}
\label{P2elemThm}
Any function $v\in P_{k+1}^-(T)$ can be uniquely determined by DoFs \eqref{P2elemdofa}--\eqref{P2elemdofb}.
\end{theorem}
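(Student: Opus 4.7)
The plan is to prove unisolvence in two stages: first match dimensions, then show that a function with vanishing DoFs must be zero via a direct-sum argument built on the decomposition \eqref{directsum}. For the dimension count, the enriched monomials appearing in \eqref{shape:1} and \eqref{shape:2} are monomials of degree exactly $k+1$ in the affine coordinates $\lambda_1,\lambda_2,\lambda_3$, hence linearly independent modulo $P_k(T)$. This gives $\dim P_{k+1}^-(T) = \dim P_k(T) + \ell_k = 2k(k+1)+1$, where $\ell_k=3$ for $k=2$ and $\ell_k=5$ for $k=3$, which matches the total number of DoFs in \eqref{P2elemdofa}--\eqref{P2elemdofb}.

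For unisolvence, I would assume $v\in P_{k+1}^-(T)$ satisfies \eqref{P2elemdofa}--\eqref{P2elemdofb} with zero values and show $v=0$. The vanishing of the facet moments places $v$ in $\mathfrak{B}_{k+1}(T)$. Decompose $v = v_k + w$ with $v_k\in P_k(T)$ and $w$ in the span of the enriched monomials. A direct inspection of \eqref{orth:mathbbO:3} and \eqref{orth:mathbbO:4} shows that each enriched monomial lies in $\mathfrak{O}_{k+1}(T)$, so $w\in\mathfrak{O}_{k+1}(T)$. Splitting $v_k = v_k^B + v_k^O$ according to \eqref{directsum}, one can rewrite $v = v_k^B + (v_k^O + w)$, which is a decomposition in $\mathfrak{B}_{k+1}(T)\oplus\mathfrak{O}_{k+1}(T)$. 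Uniqueness of this decomposition, together with $v\in\mathfrak{B}_{k+1}(T)$, forces $v_k^O + w = 0$, so $v = v_k^B \in P_k(T)\cap\mathfrak{B}_{k+1}(T)$. By Lemma \ref{mark3.4} this intersection is one-dimensional and spanned by $\mathfrak{b}_k$, so $v = c\mathfrak{b}_k$ for some scalar $c$. The remaining DoF then yields $c\int_T \mathfrak{b}_k\,dx = 0$, and a short calculation with the standard simplex integration formula shows $\int_T \mathfrak{b}_k\,dx \neq 0$, whence $c=0$ and $v=0$.

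The main obstacle is the middle step, namely ruling out spurious bubble functions created by combining the enriched monomials with elements of $P_k(T)$. The enabling fact is that the enriched monomials were deliberately drawn from $\mathfrak{O}_{k+1}(T)$; this allows the direct sum \eqref{directsum} to localize the $\mathfrak{B}_{k+1}(T)$-component of $v$ inside the $P_k(T)$-summand, reducing the question to the already-known one-dimensional bubble space of $P_k(T)$ given by Lemma \ref{mark3.4}. Without this choice one would have to re-establish a bubble-freeness property for $P_{k+1}^-(T)$ from scratch, which would involve repeating the sort of functional argument used in Lemma \ref{dimB3--B4}.
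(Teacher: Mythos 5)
Your outline (dimension count, reduction to the bubble space, landing on $v=c\mathfrak{b}_k$, and using the volume average to kill $c$) matches the paper's, but the middle step contains a genuine gap. From $v=v_k+w$ with $v_k\in P_k(T)$ and $w$ in the span of the enriched monomials, and from $v\in\mathfrak{B}_{k+1}(T)$, the uniqueness of the decomposition \eqref{directsum} gives only $v_k^O+w=0$, i.e.\ $v=v_k^B$. The conclusion $v_k^B\in P_k(T)$ does not follow: $v_k^B$ is the $\mathfrak{B}_{k+1}(T)$-component of a degree-$k$ polynomial, and in general it is a genuine degree-$(k+1)$ polynomial (its $\mathfrak{O}_{k+1}(T)$-partner $v_k^O$ is a nonzero combination of degree-$(k+1)$ monomials for generic $v_k\in P_k(T)$). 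What you have actually shown is that $w=-v_k^O$, i.e.\ that $w$ lies in the image $S$ of $P_k(T)$ under the projection onto $\mathfrak{O}_{k+1}(T)$ along $\mathfrak{B}_{k+1}(T)$; you still need $w=0$, which holds precisely when $S\cap\operatorname{span}\{\text{enriched monomials}\}=\{0\}$. Your argument never verifies this and never uses which monomials were chosen beyond their membership in $\mathfrak{O}_{k+1}(T)$ and independence modulo $P_k(T)$ — but those two properties alone are not sufficient: any nonzero element of $S$ not lying in $P_k(T)$ belongs to $\mathfrak{O}_{k+1}(T)$, is independent of $P_k(T)$, and if included among the enrichments would create a spurious bubble in $P_{k+1}^-(T)$ outside $\operatorname{span}\{\mathfrak{b}_k\}$ and destroy unisolvence. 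This is exactly why the paper's remark stresses that the choice is constrained by the requirement that $\mathfrak{b}_k$ be the only bubble of $P_{k+1}^-(T)$.

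The paper closes this gap by a concrete computation tied to the specific monomials in \eqref{shape:1}--\eqref{shape:2}: it builds linear combinations $\mathscr{F}_1,\mathscr{F}_2,\mathscr{F}_3$ of the facet-moment functionals $\mathscr{F}_{i,j}$ that annihilate all of $P_2(T)$, and checks that the resulting $3\times 3$ system $\mathscr{F}_i(\lambda_1^2\lambda_2)c_1+\mathscr{F}_i(\lambda_2^2\lambda_3)c_2+\mathscr{F}_i(\lambda_3^2\lambda_1)c_3=0$ is nonsingular, forcing $c_1=c_2=c_3=0$ (and analogously for $k=3$). To repair your proof you would need to supply an equivalent verification, e.g.\ show directly that no nonzero combination of the chosen monomials lies in $P_k(T)+\mathfrak{B}_{k+1}(T)$; the remaining steps of your argument are then fine.
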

\begin{proof}
The number of the DoFs \eqref{P2elemdofa}--\eqref{P2elemdofb} is  $2k(k+1)+1$, which is equally the dimension of the shape function space $P_{k+1}^-(T)$. It suffices to prove $v=0$ provided that  \eqref{P2elemdofa}--\eqref{P2elemdofb} vanish. 

For simplicity of presentation, the discussion below is focused on the case $k=2$.  Let $v:=w+c_1\lambda_1^2\lambda_2+c_2\lambda_2^2\lambda_3+c_3\lambda_3^2\lambda_1$ with parameters $c_1$, $c_2$, $c_3$, and $w$ could be any function in $P_2(T)$. The vanishing of \eqref{P2elemdofa} results in
\be
\label{12uni:1}
\mathscr{F}_{i,j}(v):=\frac{1}{\vert{F_i}\vert}\int_{F_i} v \lambda_j d{s}=0, \quad i, j\in \{1, 2, 3, 4\}, i\neq j.
\ee
Let $\mathscr{F}_1,\mathscr{F}_2,\mathscr{F}_3$ be the following linear functionals 
\begin{equation*}
\begin{aligned}
\mathscr{F}_1: &= \mathscr{F}_{1,2}-\mathscr{F}_{2,1} + \mathscr{F}_{3,1} - \mathscr{F}_{1,3} + \mathscr{F}_{2,3} - \mathscr{F}_{3,2},\\
\mathscr{F}_2: &= \mathscr{F}_{1,2} - \mathscr{F}_{2,1} + \mathscr{F}_{4,1} - \mathscr{F}_{1,4} + \mathscr{F}_{2,4} - \mathscr{F}_{4,2},\\
\mathscr{F}_3: &= \mathscr{F}_{1,3} - \mathscr{F}_{3,1} + \mathscr{F}_{4,1} - \mathscr{F}_{1,4}
+ \mathscr{F}_{3,4} - \mathscr{F}_{4,3}.
\end{aligned}
\end{equation*}
It can be verified that
$\mathscr{F}_i(w)=0$ for $i=1,2,3$.
This and \eqref{12uni:1} yield 
\begin{equation*}
0=\mathscr{F}_i(v)=\mathscr{F}_i(\lambda_1^2\lambda_2)c_1+\mathscr{F}_i(\lambda_2^2\lambda_3)c_2+\mathscr{F}_i(\lambda_3^2\lambda_1)c_3,\quad i = 1, 2, 3,
\end{equation*}
which implies $c_1+c_2+c_3=0,~c_1=0$, and $c_3=0.$
Thus $c_1=c_2=c_3=0$. This shows $v \in P_2(T)$. Therefore, the vanishing of \eqref{P2elemdofa} leads to $v = c\mathfrak{b}_{2} $ with parameter $c$.
For $k=3$, similar arguments will lead to $v = c^\prime\mathfrak{b}_{3}$ with parameter $c^\prime$. 
As a result, \eqref{P2elemdofb} concludes the proof for $k=2, 3$.
\end{proof}
The global space for the reduced $k$-order nonconforming finite element is defined by
\begin{align}
V_{h,k+1}^-:= \{v\in L^2(\Omega)|\ v|_T\in P_{k+1}^-(T), \text{for all}~T\in\mathcal{T}_h, v ~\text{is $M_{k-1}$-continuous}\}.\label{H3n}
\end{align}
\section{The finite element method for the Stokes problem}
The four newly proposed nonconforming finite elements are exploited to solve the Stokes problem in this section.

For $k=2, 3$, $V_{h,k+1}$ defined in \eqref{H2n} is used to approximate the velocity. To this end, define 
\begin{equation}
  \bm{V}_{h,k+1}: = V_{h,k+1}\times V_{h,k+1}\times V_{h,k+1}.
\end{equation}
Then the nonconforming approximation of the velocity field with the homogeneous boundary conditions in the weak sense is given by 
\begin{align}
    \bm{V}_{h,k+1,D}:=\{\bm{v}\in \bm{V}_{h,k+1}|\, \bm{v}~~ \text{is $M_{k-1}$-zero on}~~ \Gamma_D\}.
\end{align}
The associated pressure will be sought in the space
\begin{align}
    Q_{h,k}:=\{q\in L^2(\Omega)|\,\,  q|_T\in P_{k}(T)~~\text{for all}~~T\in \mathcal{T}_h\}.
\end{align}
It follows from the definition of $\bm{V}_{h,k+1,D}$ and $Q_{h,k}$ that $\operatorname{div}_h\bm{V}_{h,k+1,D}\subseteq Q_{h,k}$. Thus, the discrete velocity field $\bm{v}_h\in \bm{V}_{h,D,k+1}$ is element-wise divergence-free.

Next, for $k=2, 3$, $V_{h,k+1}^-$ given in \eqref{H3n} is used to approximate the velocity. Define  
\begin{equation}
  \bm{V}_{h,k+1}^-: = V_{h,k+1}^-\times V_{h,k+1}^-\times  V_{h,k+1}^-.
\end{equation}
And the nonconforming approximation of the velocity field with the homogeneous boundary conditions in the weak sense is given by 
\begin{align}
    \bm{V}_{h,k+1,D}^-:=\{\bm{v}\in \bm{V}_{h,k+1}^-|\, \bm{v}~~ \text{is $M_{k-1}$-zero on}~~ \Gamma_D\}.
\end{align}
The associated pressure will be sought in $Q_{h,k-1}$.

Let $(\bm{V}_{h,D}, Q_h)$ be either of $(\bm{V}_{h,k+1,D}, Q_{h,k})$ and $(\bm{V}_{h,k+1,D}^-, Q_{h,k-1}$). 
The nonconforming finite element method for \eqref{eq1} is to find $(\bm{u}_h,p_h)\in \bm{V}_{h,D}\times Q_h$, such that
\be
\label{eqb}
\left\{
\begin{aligned}
a_h(\bm{u}_h,\bm{v}_h)+b_h(\bm{v}_h,p_h)&=(\bm{f},\bm{v}_h)+(\bm{g},\bm{v}_h)_{\Gamma_N} &\quad \text{ for all } \bm{v}_h\in \bm{V}_{h,D},\\
b_h(\bm{u}_h,q_h)&=0 &\quad \text{ for all } q_h\in Q_h,
\end{aligned}
\right.
\ee
where 
\begin{align*}
a_h(\bm{u}_h,\bm{v}_h):=2\mu\sum\limits_{T\in\mathcal{T}_h}\int_T\varepsilon(\bm{u}_h):\varepsilon(\bm{v}_h)\, d{x}, \quad b_h(\bm{v}_h,q_h):=-\sum\limits_{T\in\mathcal{T}_h}\int_T \operatorname{div}\bm{v}_h\, q_h\, d{x}.
\end{align*}

Besides, for $\mathbb{R}^d$-valued function $\bm{v}$ with $d\in \{1, 2, 3\}$, define 
\begin{align*}
\Vert{\bm{v}}\Vert^2_{m,h}:=\sum\limits_{T\in\mathcal{T}_h}\Vert{\bm{v}}\Vert^2_{m,T}, \quad\vert{\bm{v}}\vert_{m,h}^2:=\sum\limits_{T\in\mathcal{T}_h}\vert{\bm{v}}\vert^2_{m,T},
\end{align*}
where $m$ is a non-negative integer.

\begin{remark}
The conditions $(\operatorname{H})$ and $(\operatorname{H}^\prime)$ imply that $|\cdot|_{1,h}$ is a norm of the nonconforming finite element space $\bm{V}_{h,D}$, see for example, \cite[Lemma 2]{m1973p}.
\end{remark}

\section{The discrete inf-sup condition}
The discrete inf-sup condition for the nonconforming finite element pairs is established in this section.
For $(\bm{V}_{h,k+1,D},Q_{h,k})$, the Fortin operator \cite[Proposition 2.8]{brezzi2012mixed} can not be constructed. The key to circumvent this is to analyze the divergence space of the bubble function space. For the reduced finite element pairs, the discrete inf-sup condition for $(\bm{V}^-_{h,3,D},Q_{h,1})$ is established by constructing the Fortin operator, while this is unfeasible for $(\bm{V}^-_{h,4,D},Q_{h,2})$. To deal with this problem, the macro-element technique \cite{stenberg1984analysis} is used.  

\subsection{The discrete inf-sup condition for $(\emph{\textbf{V}}_{h,k+1,D},Q_{h,k})$ }
To prove the discrete inf-sup condition for $(\bm{V}_{h,k+1,D},Q_{h,k})$, some crucial results are presented first. 

Let $P_{0,k}^\perp(T)$ be the orthogonal complement of $P_{0}(T)$ with respect to $P_{k}(T)$. Define $\bm{\mathfrak{B}}_{k+1}(T) := \mathfrak{B}_{k+1}(T)\times \mathfrak{B}_{k+1}(T)\times \mathfrak{B}_{k+1}(T)$. 
A key result is presented in the following lemma, but it runs into the problem of being hard to prove, since the dimension of $\operatorname{div}\bm{\mathfrak{B}}_{k+1}(T)$ is difficult to figure out. To overcome that, a constructive proof is adopted below.

\begin{lemma}\label{forBB1:1}
For $k=2,3$, there holds 
$\operatorname{div}\bm{\mathfrak{B}}_{k+1}(T)=P_{0,k}^\perp(T)$.
\end{lemma}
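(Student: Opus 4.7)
The plan is to prove the two inclusions separately. The easy containment $\operatorname{div}\bm{\mathfrak{B}}_{k+1}(T)\subseteq P_{0,k}^\perp(T)$ follows from the divergence theorem: for any $\bm{v}\in\bm{\mathfrak{B}}_{k+1}(T)$ one clearly has $\operatorname{div}\bm{v}\in P_k(T)$, and
$$\int_T \operatorname{div}\bm{v}\,dx=\sum_{F\in\mathcal{F}(T)}\int_F \bm{v}\cdot\bm{n}_F\,ds=0,$$
since $\bm{n}_F$ is a constant vector on each face and every component of $\bm{v}$ has vanishing face moments against $\bm{P}_{k-1}(F)\supseteq \bm{P}_0(F)$ (which uses $k-1\ge 1$). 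Hence $\operatorname{div}\bm{v}$ is $L^2$-orthogonal to constants and lies in $P_{0,k}^\perp(T)$.

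For the reverse inclusion I would argue by dimension. Since $\dim P_{0,k}^\perp(T)=\dim P_k(T)-1$, this is $9$ for $k=2$ and $19$ for $k=3$. It therefore suffices to exhibit $9$ (resp.\ $19$) vector bubbles in $\bm{\mathfrak{B}}_{k+1}(T)$ whose divergences are linearly independent in $P_k(T)$. Using Lemma \ref{dimB3--B4}, I have an explicit basis of $\mathfrak{B}_{k+1}(T)$ at hand: the $\phi_i,\varphi_i$ in \eqref{B3bb1}--\eqref{B3bb2} for $k=2$, and the $\phi_i,\phi,\varphi_{ij}$ in \eqref{B4bb1}--\eqref{B4bb2} for $k=3$. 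I would form candidate vector bubbles of the form $\phi^{(m)}\bm{e}_\ell$ with $\bm{e}_\ell$ the Cartesian basis of $\mathbb{R}^3$; each such function lies in $\bm{\mathfrak{B}}_{k+1}(T)$ componentwise, and its divergence is simply the partial derivative $\partial_\ell \phi^{(m)}\in P_k(T)$. Expanding each $\partial_\ell\phi^{(m)}$ in the barycentric monomial basis of $P_k(T)$ and assembling the coefficients into a rectangular matrix, the required lower bound reduces to exhibiting a nonsingular square submatrix of size $9$ (resp.\ $19$). Combined with the first inclusion, this forces equality.

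The main obstacle is purely computational: it is not the dimension of $\bm{\mathfrak{B}}_{k+1}(T)$ itself ($24$ and $33$, respectively) that matters — this ambient space is much larger than the target — but rather the fact that the divergence map has a substantial kernel (containing, in particular, many divergence-free vector bubbles), so the size of its image cannot be read off abstractly. To keep the bookkeeping under control, especially for $k=3$ where $\varphi_{ij}$ involves many terms, I would first simplify by using barycentric directional derivatives: since $\Delta\lambda_i=0$, one has $\operatorname{div}(\phi\nabla\lambda_i)=\nabla\phi\cdot\nabla\lambda_i$, which couples naturally with the barycentric form of the bubbles, and then exploit the obvious $S_4$-symmetry in the index $i$ of $\phi_i$ and the $S_4$-symmetry of the permutation action on $\varphi_i$ and $\varphi_{ij}$ to reduce the verification of linear independence to a handful of scalar identities per orbit, which can then be checked by direct calculation.
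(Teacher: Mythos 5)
Your proof of the easy inclusion $\operatorname{div}\bm{\mathfrak{B}}_{k+1}(T)\subseteq P_{0,k}^\perp(T)$ is exactly the paper's (a small quibble: you only need $P_0(F)\subseteq P_{k-1}(F)$, i.e.\ $k\ge 1$, not $k-1\ge 1$). Your strategy for the reverse inclusion --- take the explicit bubble bases \eqref{B3bb1}--\eqref{B3bb2} and \eqref{B4bb1}--\eqref{B4bb2} from Lemma~\ref{dimB3--B4}, multiply by constant vectors, and show the resulting divergences span a subspace of dimension $\dim P_k(T)-1$ --- is the same constructive idea the paper uses; the paper merely organizes the endgame by exhibiting an explicit spanning set of $P_{0,k}^\perp(T)$ inside the image (for $k=2$: the functions $10\lambda_i^2-1$ and $20\lambda_j\lambda_l-1$) rather than by a rank count, which is equivalent.

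The gap is that the decisive verification is announced but never performed, and the way you set it up makes it harder than it needs to be. With Cartesian directions $\bm{e}_\ell$ the entries of your coefficient matrix are the partial derivatives $\partial_\ell\lambda_j$, which depend on the shape of $T$; your proposed fix, pairing with $\nabla\lambda_i$, does not help, since $\operatorname{div}(\phi\nabla\lambda_i)=\sum_j(\partial\phi/\partial\lambda_j)\,\nabla\lambda_j\cdot\nabla\lambda_i$ and the Gram entries $\nabla\lambda_j\cdot\nabla\lambda_i$ are again geometry-dependent. So ``exhibit a nonsingular $9\times 9$ (resp.\ $19\times 19$) submatrix'' is not yet reduced to a single finite computation valid for every tetrahedron. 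The image itself is of course shape-independent in the relevant sense, but to see that cleanly you should pair the bubbles with the edge tangents $\bm{t}_{ij}=a_i-a_j$, for which $\nabla\lambda_k\cdot\bm{t}_{ij}=\delta_{ki}-\delta_{kj}$ is purely combinatorial (this, together with \eqref{lambda:1}, is precisely what the paper does: $\phi_i\bm{\zeta}$ with $\bm{\zeta}$ arbitrary yields $15\lambda_i^2-10\lambda_i+1$, and $\varphi_i\bm{t}_{ij}$ yields $15\lambda_k\lambda_l+3\lambda_j+2\lambda_i-2$, from which $\lambda_i-\lambda_j$, $10\lambda_i^2-1$ and $20\lambda_j\lambda_l-1$ are assembled). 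Until you either perform that calculation or switch to tangent-vector pairings and exhibit the spanning set, the reverse inclusion is not established.
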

\begin{proof}
Given any function $\bm{v}\in \bm{\mathfrak{B}}_{k+1}(T)$, the definition of $\bm{\mathfrak{B}}_{k+1}(T)$ shows $\operatorname{div}\bm{v}\in P_k(T)$. In addition, an integration by parts leads to 
\begin{align*}
\int_T \operatorname{div}\bm{v}\, q ~dx=\sum_{F\in\mathcal{F}(T)}\int_F \bm{v}\cdot\mathbf{n}\, q ~ds=0~~\text{for all}~~q\in P_0(T).
\end{align*}
Thus $\operatorname{div}\bm{\mathfrak{B}}_{k+1}(T)\subseteq  P_{0,k}^\perp(T)$. It suffices to prove $P_{0,k}^\perp(T)\subseteq\operatorname{div}\bm{\mathfrak{B}}_{k+1}(T)$. 

For $k=2$, recall the eight linearly independent basis functions 
\begin{align*}
  &\phi_i=5\lambda_i^3-5\lambda_i^2+\lambda_i,~~ 1\leq i\leq 4, \\
  &\varphi_i = -2\lambda_i^2+4\lambda_i-2 + 3\sum_{j=1, j\neq i}^4\lambda_j^2+30\prod_{j=1,j\neq i}^4\lambda_j, ~~1\leq i\leq 4,
\end{align*}
in ${\mathfrak{B}}_{3}(T)$. Since $\phi_i\bm{\zeta}\in \bm{\mathfrak{B}}_{3}(T)$ for any vector $\bm{\zeta}\in \mathbb{R}^3$, there holds
\begin{align*}
    \operatorname{div}(\phi_i\bm{\zeta})=(15\lambda_i^2-10\lambda_i+1)(\nabla\lambda_i\cdot \bm{\zeta})\in \operatorname{div}\bm{\mathfrak{B}}_{3}(T).
\end{align*}
This shows 
\begin{equation}\label{divB3_1}
    15\lambda_i^2-10\lambda_i+1\in \operatorname{div}\bm{\mathfrak{B}}_3(T).
\end{equation}

Furthermore, let $F_i$ be the face of $T$ opposite to the vertex $a_i$ and $\bm{\nu}_{F_{i}}$ be the outward normal vector of $F_i$ with magnitude $2|F_i|$, $1\leq i\leq 4$. Notice that 
\be\label{lambda:1}
\nabla\lambda_i=-\frac{1}{3!|T|}\bm{\nu}_{F_i},
\ee
and the tangent vector $\bm{t}_{ij}=a_i-a_j$ is orthogonal to the face normal vector $\bm{\nu}_{F_k}$, where $i$, $j$, and $k$ are distinct integers in $\{1, 2, 3, 4\}$. Since $\varphi_i\bm{t}_{ij}\in\bm{\mathfrak{B}}_3(T)$, it follows from the chain rule and \eqref{lambda:1} that 
\begin{align*}
    \operatorname{div}(\varphi_i\bm{t}_{ij}) = (30\lambda_{k}\lambda_l +6 \lambda_j +4\lambda_i-4)(\nabla\lambda_j\cdot\bm{t}_{ij})\in \operatorname{div}\bm{\mathfrak{B}}_{3}(T),
\end{align*}
here $i$, $j$, $k$, and $l$ are distinct integers in $\{1, 2, 3, 4\}$.
Thus 
\be
\label{divB3eq1}
15\lambda_k\lambda_l+3\lambda_j+2\lambda_i-2\in \operatorname{div}\bm{\mathfrak{B}}_{3}(T), 
\ee
and by the symmetry of indices $i$ and $j$, there holds
\be
\label{divB3eq2}
15\lambda_k\lambda_l+3\lambda_i+2\lambda_j-2\in \operatorname{div}\bm{\mathfrak{B}}_{3}(T).
\ee
A subtraction of \eqref{divB3eq2} from \eqref{divB3eq1} yields $\lambda_i-\lambda_j\in \operatorname{div} \bm{\mathfrak{B}}_3(T)$. Therefore
\be\label{tools:1}
4\lambda_i-1=\sum_{j=1,i\neq j}^4(\lambda_i-\lambda_j)\in\operatorname{div}\bm{\mathfrak{B}}_{3}(T).
\ee
This and \eqref{divB3_1} imply $(10\lambda_i^2-1)\in \operatorname{div}\bm{\mathfrak{B}}_{3}(T)$ for $1\leq i\leq 4$. Besides, \eqref{tools:1} and \eqref{divB3eq1} show $(20\lambda_j\lambda_l-1)\in\operatorname{div}\bm{\mathfrak{B}}_{3}(T)$ with distinct $j$ and $l$ in $\{1, 2, 3, 4\}$. Thus
\begin{align*}
P_{0,2}^\perp(T)=\operatorname{span}\{10\lambda_i^2-1, 20\lambda_j\lambda_l-1|\,i, j, l\in\{1, 2, 3, 4\}, j\neq l\}\subseteq \operatorname{div}\bm{\mathfrak{B}}_{3}(T)
\end{align*} 
follows. For $k=3$, by virtue of the explicit representation of the bubble function \eqref{B4bb1}--\eqref{B4bb2}, $P_{0,3}^\perp(T)\subseteq \operatorname{div}\bm{\mathfrak{B}}_{4}(T)$ can be proved similarly, and the details are omitted here. Hence $P_{0,k}^\perp(T)\subseteq \operatorname{div}\bm{\mathfrak{B}}_{k+1}(T)$ holds for $k=2,3$. This concludes the proof. 
\end{proof}

In light of Lemma \ref{forBB1:1}, the following lemma holds.
\begin{lemma}\label{BB1:lemma:1}
For any $q_h\in Q_{h,k}$, if 
\be\label{p0:orth}
\int_T q_h\, dx = 0~~\text{for all}~~T\in \mathcal{T}_h, 
\ee
then there exists a $\bm{v}_h\in \bm{V}_{h,k+1,D}$ such that 
\begin{align}
    \operatorname{div}\bm{v}_h=q_h,\quad \text{and}\quad \|\bm{v}_h\|_{1,h}\lesssim \|q_h\|_0.\label{forBB1:2}
\end{align}
\end{lemma}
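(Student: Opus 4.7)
The plan is to construct $\bm{v}_h$ element by element using the local surjectivity supplied by Lemma \ref{forBB1:1}, and then obtain the norm bound by an affine scaling argument on a reference tetrahedron.

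First, on every $T \in \mathcal{T}_h$ the restriction $q_h|_T \in P_k(T)$ is orthogonal to constants by \eqref{p0:orth}, so $q_h|_T \in P_{0,k}^\perp(T)$. Lemma \ref{forBB1:1} then yields some $\bm{v}_T \in \bm{\mathfrak{B}}_{k+1}(T)$ with $\operatorname{div}\bm{v}_T = q_h|_T$, and I would set $\bm{v}_h|_T := \bm{v}_T$. Since every component of $\bm{v}_T$ belongs to $\mathfrak{B}_{k+1}(T)$, its facet moments against $P_{k-1}$ vanish on every $F \in \mathcal{F}(T)$; consequently the jump moments on every interior face and the trace moments on every Dirichlet face vanish identically, so (H) and (H$'$) hold for free and $\bm{v}_h \in \bm{V}_{h,k+1,D}$ with $\operatorname{div}_h \bm{v}_h = q_h$ by construction.

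To get the bound with a constant independent of $T$, I would fix a reference tetrahedron $\hat T$ and choose a bounded linear right inverse $\hat R : P_{0,k}^\perp(\hat T) \to \bm{\mathfrak{B}}_{k+1}(\hat T)$ of $\operatorname{div}$, whose existence follows from Lemma \ref{forBB1:1} on $\hat T$ together with the equivalence of norms in finite dimensions (e.g., invert $\operatorname{div}$ on the orthogonal complement of its kernel). For the affine bijection $F_T(\hat x) = B_T \hat x + b_T$ mapping $\hat T$ onto $T$, I would pull back $\hat q := q_h|_T \circ F_T \in P_{0,k}^\perp(\hat T)$, set $\hat{\bm{v}} := \hat R(\hat q)$, and push forward by $\bm{v}_T(x) := B_T \hat{\bm{v}}(F_T^{-1}(x))$. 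A short chain-rule computation (the contractions $\sum_i B_{ij}(B_T^{-1})_{ki} = \delta_{jk}$) gives $\operatorname{div}\bm{v}_T(x) = \operatorname{div}\hat{\bm{v}}(\hat x) = \hat q(\hat x) = q_h|_T(x)$ pointwise, and the affine invariance of facet integrals keeps $\bm{v}_T$ inside $\bm{\mathfrak{B}}_{k+1}(T)$.

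The standard shape-regular scalings $\|B_T\| \lesssim h_T$, $\|B_T^{-1}\| \lesssim h_T^{-1}$, $|\det B_T| \sim h_T^3$ then yield $|\bm{v}_T|_{1,T} \lesssim h_T^{3/2}|\hat{\bm{v}}|_{1,\hat T}$, $\|\bm{v}_T\|_{0,T} \lesssim h_T^{5/2}\|\hat{\bm{v}}\|_{0,\hat T}$, and $\|\hat q\|_{0,\hat T} \sim h_T^{-3/2}\|q_h\|_{0,T}$. Combining these with the boundedness of $\hat R$ gives the elementwise estimate $\|\bm{v}_T\|_{1,T} \lesssim \|q_h\|_{0,T}$; squaring and summing over $T$ and using the boundedness of $\Omega$ produces $\|\bm{v}_h\|_{1,h} \lesssim \|q_h\|_0$. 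The hard part is choosing the transformation correctly, since one simultaneously needs divergence preservation so that $\operatorname{div}\bm{v}_T = q_h|_T$ survives the change of variables, preservation of the facet-moment conditions that define $\bm{\mathfrak{B}}_{k+1}(T)$, and cancellation of the $h_T$-powers between pulling $q_h$ back and pushing $\hat{\bm{v}}$ forward. The map $\bm{v}_T(x) = B_T\hat{\bm{v}}(F_T^{-1}(x))$ achieves all three at once, whereas the contravariant Piola transform $|\det B_T|^{-1} B_T \hat{\bm{v}}\circ F_T^{-1}$ would leave a spurious $|\det B_T|^{-1}$ factor in $\operatorname{div}\bm{v}_T$ and violate the first requirement.
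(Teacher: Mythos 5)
Your proof is correct and follows essentially the same route as the paper: local inversion of the divergence on each tetrahedron via Lemma \ref{forBB1:1}, assembly into $\bm{V}_{h,k+1,D}$ using the vanishing facet moments of $\bm{\mathfrak{B}}_{k+1}(T)$, and the norm bound by affine mapping and scaling. The paper states the scaling step in one line; you supply the details (the transform $\bm{v}_T = B_T\hat{\bm v}\circ F_T^{-1}$, which preserves both the divergence identity and the facet-moment conditions, and the cancellation of the $h_T$-powers), and these details check out.
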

\begin{proof}
Given any $q_h\in Q_{h,k}$ satisfying \eqref{p0:orth}, 
Lemma \ref{forBB1:1}
implies there exists a $\bm{v}_T\in \bm{\mathfrak{B}}_{k+1}(T)$ such that $\operatorname{div}\bm{v}_T=q_h|_T$. Let $\bm{v}_h|_T=\bm{v}_T$ for all $T\in\mathcal{T}_h$, then $\bm{v}_h\in \bm{V}_{h,k+1,D}$ is obtained. Since the matching $\operatorname{div}\bm{v}_h=q_h$ is independently done on each tetrahedron $T\in\mathcal{T}_h$, by affine mapping and the scaling argument, \eqref{forBB1:2} holds. 
\end{proof}

\begin{lemma}\label{BB1:lemma:2}
Given any $q_h\in Q_{h,k}$, there exists a $\bm{v}_h\in \bm{V}_{h,k+1,D}$ such that
\be\label{bb1:step2:1}
\int_T (\operatorname{div}\bm{v}_h-q_h)\, dx=0~~\text{for all}~~T\in \mathcal{T}_h, \quad \text{and}\quad \|\bm{v}_h\|_{1,h}\lesssim \|q_h\|_0.
\ee
\end{lemma}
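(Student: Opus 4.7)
The plan is to match the element-wise mean of $q_h$ (i.e.\ its element-wise $L^2$-projection onto piecewise constants) by pulling back a continuous velocity through a Fortin-type interpolation that preserves face moments. Lemma \ref{forBB1:1} and Lemma \ref{BB1:lemma:1} already handle the part orthogonal to piecewise constants using interior bubble modes; the present lemma complements them by matching precisely the piecewise-constant component.

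First I would introduce the element-wise $L^2$-projection of $q_h$ onto piecewise constants, namely $\bar{q}_h\in L^2(\Omega)$ with $\bar{q}_h|_T=\frac{1}{|T|}\int_T q_h\,dx$, so that $\|\bar{q}_h\|_0\le\|q_h\|_0$. Since $\Gamma_D$ has positive two-dimensional measure, the continuous Stokes problem on $\bm{H}^1_D(\Omega)\times L^2(\Omega)$ is well-posed and the continuous inf-sup condition yields $\bm{v}\in\bm{H}^1_D(\Omega)$ with
\begin{equation*}
\operatorname{div}\bm{v}=\bar{q}_h,\qquad \|\bm{v}\|_1\lesssim \|\bar{q}_h\|_0\lesssim\|q_h\|_0.
\end{equation*}

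Next I would introduce a canonical Fortin-type interpolant $\Pi_h\bm{v}\in\bm{V}_{h,k+1,D}$ defined tetrahedron by tetrahedron through the DoFs of Section 3:
\begin{equation*}
\int_F \Pi_h\bm{v}\cdot\bm{q}\,ds=\int_F \bm{v}\cdot\bm{q}\,ds\quad\forall\,\bm{q}\in\bm{P}_{k-1}(F),\ F\in\mathcal{F}(T),
\end{equation*}
together with
\begin{equation*}
\int_T \Pi_h\bm{v}\cdot\bm{q}\,dx=\int_T \bm{v}\cdot\bm{q}\,dx\quad\forall\,\bm{q}\in\bm{\mathfrak{B}}_{k+1}(T).
\end{equation*}
Because $\bm{v}|_{\Gamma_D}=0$, the Dirichlet-face DoFs of $\Pi_h\bm{v}$ vanish, so $\Pi_h\bm{v}\in\bm{V}_{h,k+1,D}$. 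Since $k\ge 2$, we have $\bm{P}_0(F)\subseteq\bm{P}_{k-1}(F)$; testing the face-moment identity against the constant $\mathbf{n}_F$ and summing over the four faces of $T$ gives, by the divergence theorem,
\begin{equation*}
\int_T \operatorname{div}(\Pi_h\bm{v})\,dx=\sum_{F\in\mathcal{F}(T)}\int_F \Pi_h\bm{v}\cdot\mathbf{n}\,ds=\sum_{F\in\mathcal{F}(T)}\int_F \bm{v}\cdot\mathbf{n}\,ds=\int_T \operatorname{div}\bm{v}\,dx=\int_T q_h\,dx,
\end{equation*}
which is exactly the required matching. Setting $\bm{v}_h:=\Pi_h\bm{v}$ finishes the existence part.

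The step I expect to be the main obstacle is the stability bound $\|\Pi_h\bm{v}\|_{1,h}\lesssim \|\bm{v}\|_1$ on $H^1$ functions, because the face DoFs involve only $L^2$ traces. I would handle it by the standard route: pull back to a reference tetrahedron $\hat T$, note that on $\hat T$ the map $\hat{\bm{v}}\mapsto\widehat{\Pi}\hat{\bm{v}}$ is continuous from $\bm{H}^1(\hat T)$ into $\bm{P}_{k+1}(\hat T)$ (since the DoFs \eqref{facemomentP3}--\eqref{momentP3} are bounded linear functionals on $\bm{H}^1(\hat T)$ via $L^2$ face traces), and apply affine scaling together with the local trace inequality $\|\bm{v}\|_{0,\partial T}\lesssim h_T^{-1/2}\|\bm{v}\|_{0,T}+h_T^{1/2}|\bm{v}|_{1,T}$ to obtain $|\Pi_h\bm{v}|_{1,T}\lesssim |\bm{v}|_{1,T}+h_T^{-1}\|\bm{v}\|_{0,T}$. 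Summing over $T$ and combining with the continuous bound $\|\bm{v}\|_1\lesssim\|q_h\|_0$ produces $\|\bm{v}_h\|_{1,h}\lesssim\|q_h\|_0$ as required.
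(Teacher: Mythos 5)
Your proposal follows essentially the same route as the paper: solve the divergence equation continuously with $\|\bm{v}\|_1\lesssim\|q_h\|_0$, then apply the moment-preserving interpolation $\Pi_{h,k+1}$ defined by the DoFs \eqref{facemomentP3}--\eqref{momentP3} and use the preservation of the lowest face moments to match element averages of the divergence. Your detour through the piecewise-constant projection $\bar q_h$ is harmless (only $\int_T q_h\,dx$ is matched either way), and the existence/matching part of your argument is exactly the paper's.

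The one step that does not close as written is the stability bound, which you yourself flag as the main obstacle. The local estimate you state, $|\Pi_h\bm{v}|_{1,T}\lesssim |\bm{v}|_{1,T}+h_T^{-1}\|\bm{v}\|_{0,T}$, does \emph{not} sum to $\lesssim\|\bm{v}\|_1$: the term $h_T^{-1}\|\bm{v}\|_{0,T}$ produces an $h^{-1}\|\bm{v}\|_0$ contribution that blows up as $h\to 0$, so the concluding sentence ``summing over $T$ \dots produces $\|\bm{v}_h\|_{1,h}\lesssim\|q_h\|_0$'' does not follow. The standard repair is to use that $\Pi_{h,k+1}$ reproduces polynomials of degree $k+1$ (in particular constants) on each element, so that $|\Pi_h\bm{v}|_{1,T}=|\Pi_h(\bm{v}-\bar{\bm{v}}_T)|_{1,T}\lesssim |\bm{v}|_{1,T}+h_T^{-1}\|\bm{v}-\bar{\bm{v}}_T\|_{0,T}\lesssim|\bm{v}|_{1,T}$ by the elementwise Poincar\'e inequality $\|\bm{v}-\bar{\bm{v}}_T\|_{0,T}\lesssim h_T|\bm{v}|_{1,T}$, where $\bar{\bm{v}}_T$ is the mean of $\bm{v}$ over $T$; this local bound sums correctly. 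For fairness, the paper itself merely asserts the bound \eqref{bounded:pi1} without proof, so your attempt actually goes further than the paper here --- it just needs the constant-reproduction argument to be inserted.
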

\begin{proof}
Given any $q_h\in Q_{h,k}$, it follows from the stability of the continuous formulation \cite[Corollary 2.4]{Girault1986FiniteEM}, there exists $\bm{v}\in \bm{H}_D^1(\Omega)$ such that 
\begin{equation*}
\operatorname{div}\bm{v}=q_h~~\text{and}~~\|\bm{v}\|_{1}\lesssim \|q_h\|_0.
\end{equation*}
For $k=2, 3$, and $\bm{v}\in\bm{H}_{D}^1(\Omega)$, define an interpolation $\Pi_{h,k+1}: \bm{H}_D^1(\Omega)\rightarrow \bm{V}_{h,k+1,D}$ by 
\begin{align}
    &\int_F \Pi_{h,k+1}\bm{v}\cdot \bm{q}\,ds=\int_F \bm{v}\cdot \bm{q}\,ds~~\text{for all}~~\bm{q}\in \bm{P}_{k-1}(F),\, F\in\mathcal{F}_h(\Omega)\cup \mathcal{F}_h(\Gamma_D),\label{pi1:1}\\
    &\int_T\Pi_{h,k+1}\bm{v}\cdot \bm{q}\, dx=\int_T\bm{v}\cdot \bm{q}\, dx~~\text{for all}~~\bm{q}\in \bm{\mathfrak{B}}_{k+1}(T),\, T\in\mathcal{T}_h.\label{pi1:2}
\end{align}
Due to the unisolvence of \eqref{facemomentP3}--\eqref{momentP3}, the interpolation $\Pi_{h,k+1}$ is well-defined, and 
\be\label{bounded:pi1}
\|\Pi_{h,k+1}\bm{v}\|_{1,h}\lesssim \|\bm{v}\|_{1}\lesssim \|q_h\|_0.
\ee
Let $\bm{v}_h:=\Pi_{h,k+1}\bm{v}$. The integration by parts and \eqref{pi1:1}--\eqref{pi1:2} lead to
\begin{align*}
    \int_T \operatorname{div}\bm{v}_h-q_h\, dx &= \int_{\partial{T}}\bm{v}_h\cdot \mathbf{n}\, ds -\int_T q_h\, dx\\
    &=\int_{\partial T}\bm{v}\cdot\mathbf{n}\, ds-\int_T q_h\, dx=\int_T \operatorname{div}\bm{v}-q_h\, dx=0.
\end{align*}
Thus \eqref{bb1:step2:1} follows.

\end{proof}

The discrete inf-sup condition is established in the following theorem by Lemma \ref{BB1:lemma:1} and Lemma \ref{BB1:lemma:2}.
\begin{theorem}\label{BB}
For $k=2, 3$, there exists a positive constant $c$ independent of the mesh-size such that 
\be
\label{infcon}
\sup_{\tv\in \bm{V}_{h,k+1,D}}\frac{b_{h}(q,\tv)}{\|\tv\|_{1,h}}\geq c \|q\|_{0} \quad \text{ for all }~~ q\in Q_{h,k}/\mathbb{R},
\ee
where $Q_{h,k}/\mathbb{R}:=\{q\in Q_{h,k}:\int_{\Omega}q\, d{x}=0\}$.
\end{theorem}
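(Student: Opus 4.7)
The plan is to use the two-level splitting strategy (in the style of Boland--Nicolaides, as adapted in the work MR3301063 cited in the introduction), combining Lemma \ref{BB1:lemma:1} and Lemma \ref{BB1:lemma:2} as the two building blocks. Given $q\in Q_{h,k}/\mathbb{R}$, I would first decompose $q = q_0 + q_1$, where $q_0|_T := |T|^{-1}\int_T q\,dx$ is the element-wise $L^2$-projection onto piecewise constants and $q_1 := q - q_0$ satisfies $\int_T q_1\,dx = 0$ for every $T\in\mathcal{T}_h$. Since $\int_\Omega q\,dx = 0$, one also has $\int_\Omega q_0\,dx = 0$, and the two components are $L^2(\Omega)$-orthogonal so that $\|q\|_0^2 = \|q_0\|_0^2 + \|q_1\|_0^2$.

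Next, applied to $-q_1$, Lemma \ref{BB1:lemma:1} produces $\bm{v}_1 \in \bm{V}_{h,k+1,D}$ with $\operatorname{div}_h\bm{v}_1 = -q_1$ and $\|\bm{v}_1\|_{1,h}\lesssim \|q_1\|_0$; applied to $-q_0$, Lemma \ref{BB1:lemma:2} yields $\bm{v}_0 \in \bm{V}_{h,k+1,D}$ satisfying $\int_T(\operatorname{div}\bm{v}_0 + q_0)\,dx = 0$ for every $T\in\mathcal{T}_h$ together with $\|\bm{v}_0\|_{1,h}\lesssim \|q_0\|_0$. The candidate test function is then $\bm{v} := \bm{v}_1 + \delta\bm{v}_0$ for a small positive parameter $\delta$ to be tuned.

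Expanding $b_h(\bm{v}, q)$ into four pieces and using the defining relations gives: $-\int_\Omega\operatorname{div}_h\bm{v}_1\cdot q_1\,dx = \|q_1\|_0^2$; $-\int_\Omega\operatorname{div}_h\bm{v}_1\cdot q_0\,dx = 0$, because $q_0$ is piecewise constant and $\operatorname{div}_h\bm{v}_1 = -q_1$ has zero element-wise mean; $-\delta\int_\Omega\operatorname{div}\bm{v}_0\cdot q_0\,dx = \delta\|q_0\|_0^2$, again by constancy of $q_0$ on each element together with the relation from Lemma \ref{BB1:lemma:2}; and the cross term $-\delta\int_\Omega\operatorname{div}\bm{v}_0\cdot q_1\,dx$ is controlled via Cauchy--Schwarz and the $H^1$-bound on $\bm{v}_0$ by $C\delta\|q_0\|_0\|q_1\|_0$. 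A Young inequality absorbs this cross term into the two positive pieces provided $\delta$ is taken sufficiently small, which yields $b_h(\bm{v},q)\gtrsim \|q_0\|_0^2 + \|q_1\|_0^2 = \|q\|_0^2$, while the triangle inequality together with the two stability bounds gives $\|\bm{v}\|_{1,h}\lesssim \|q\|_0$. Dividing and taking the supremum produces the desired discrete inf-sup constant $c$.

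The substantive difficulty in the whole analysis is not in the combination step above but in the lemmas feeding into it; in particular, the identification $\operatorname{div}\bm{\mathfrak{B}}_{k+1}(T) = P_{0,k}^\perp(T)$ supplied by Lemma \ref{forBB1:1} is precisely what replaces a Fortin operator at the local level and powers the first step. Once those ingredients are available, the proof of Theorem \ref{BB} itself reduces to the routine Boland--Nicolaides parameter-balancing exercise outlined above.
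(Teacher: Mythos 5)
Your proposal is correct, and it rests on exactly the same two ingredients as the paper (Lemma \ref{BB1:lemma:1} and Lemma \ref{BB1:lemma:2}), but the combination step is genuinely different. You split $q=q_0+q_1$ a priori into its element-wise mean and the zero-mean remainder, treat the two pieces separately, and then recombine with a small parameter $\delta$ and a Young-inequality absorption of the cross term --- the classical Boland--Nicolaides balancing. The paper instead chains the two lemmas: it first takes $\bm v_1$ from Lemma \ref{BB1:lemma:2} so that $\int_T(\operatorname{div}\bm v_1-q)\,dx=0$ on every $T$, observes that the residual $q-\operatorname{div}_h\bm v_1$ then satisfies exactly the hypothesis of Lemma \ref{BB1:lemma:1}, and obtains $\bm v_2$ with $\operatorname{div}_h\bm v_2=q-\operatorname{div}_h\bm v_1$; the sum $\bm v=\bm v_1+\bm v_2$ is an exact divergence preimage, $\operatorname{div}_h\bm v=q$ with $\|\bm v\|_{1,h}\lesssim\|q\|_0$, from which \eqref{infcon} is immediate. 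The paper's route buys two things: there is no parameter to tune and no cross term to absorb, and it exhibits the stronger fact that $\operatorname{div}_h$ maps $\bm V_{h,k+1,D}$ onto $Q_{h,k}/\mathbb{R}$ with a bounded right inverse, which dovetails with the element-wise divergence-free property emphasized throughout. Your route buys nothing extra here but is equally rigorous; all the individual identities you use (orthogonality of $q_0$ and $q_1$, $\int_T\operatorname{div}\bm v_1\,q_0\,dx=0$, the sign bookkeeping, and the bound $\|\operatorname{div}\bm v_0\|_0\lesssim\|q_0\|_0$) check out. You are also right that the real content lives in Lemma \ref{forBB1:1}, which substitutes for the unavailable Fortin operator.
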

\begin{proof}
For $k=2, 3$, given any $q_h\in Q_{h,k}$, Lemma \ref{BB1:lemma:2} shows there exists a $\bm{v}_1\in \bm{V}_{h,k+1,D}$ such that 
\be\label{bb1:proof:eq1}
\int_T\operatorname{div}\bm{v}_1 -q_h\, dx=0~~\text{and}~~\|\bm{v}_1\|_{1,h}\lesssim \|q_h\|_0.
\ee
Since $\operatorname{div}\bm{V}_{h,k+1,D}\subseteq Q_{h,k}$, it follows from Lemma \ref{BB1:lemma:1} that there exists a $\bm{v}_2\in \bm{V}_{h,k+1,D}$ such that
\be\label{bb1:proof:eq2}
\operatorname{div}\bm{v}_2=q_h-\operatorname{div}\bm{v}_1\quad \text{and}\quad \|\bm{v}_2\|_{1,h}\lesssim \|\operatorname{div}\bm{v}_1-q_h\|_0.
\ee
Let $\bm{v}:=\bm{v}_1+\bm{v}_2$. This implies that 
\begin{equation*}
\operatorname{div}\bm{v}=q_h\quad \text{and}\quad \|\bm{v}\|_{1,h}\lesssim \|q_h\|_0.
\end{equation*}
Thus the discrete inf-sup condition \eqref{infcon} follows.
\end{proof}
\subsection{The discrete inf-sup condition for $(\emph{\textbf{V}}_{h,k+1,D}^{\,-},Q_{h,k-1})$} The discrete inf-sup condition for the reduced element pair $(\bm{V}_{h, k+1,D}^-, Q_{h,k-1})$ is proved in this subsection. 

It should be noted that the Fortin operator \cite[Proposition 2.8]{brezzi2012mixed} can be constructed for the case $k=2$ by using DoFs \eqref{P2elemdofa}--\eqref{P2elemdofb}.
However, this is unfeasible for the case $k=3$, as $\nabla (q|_T)\notin \bm{P}_0(T)$ for a given  $q\in Q_{h,2}$. To establish the discrete inf-sup condition for $(\bm{V}_{h,4,D}^-, Q_{h,2})$, the so-called macro-element technique introduced in \cite{stenberg1984analysis} is adopted.

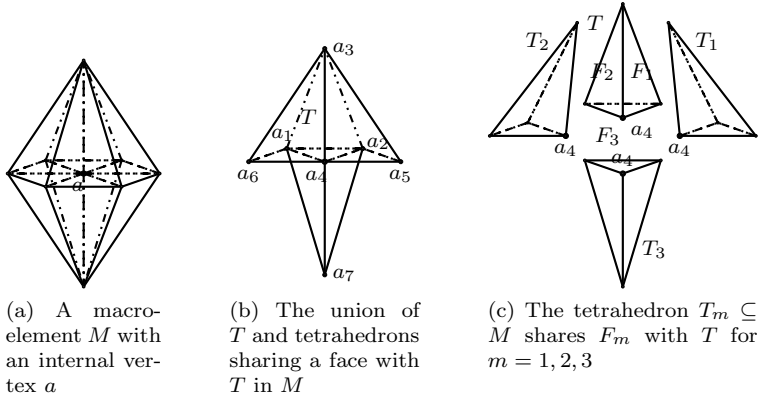
\begin{figure}[tbhp]
\captionsetup[subfigure]{font=footnotesize}
\centering
\subfloat[A macro-element $M$ with an internal vertex $a$]{\label{fig1:a}\begin{tikzpicture}[line width=0.3pt,scale=0.5]
  \coordinate (o) at (0,0);
 \coordinate (oa) at (0-0.15,0-0.0);
 
\coordinate (a1) at (2.0000, 0.0000);
\coordinate (a2) at (1.0000, 0.35);
\coordinate (a3) at (-1.0000, 0.35);
\coordinate (a4) at (-2.0000, 0.0000);
\coordinate (a5) at (-1.0000, -0.35);
\coordinate (a6) at (1.0000, -0.35);

  \coordinate (b) at (0,3);
  \coordinate (c) at (0,-3);
 \filldraw [black]

	     (o)        circle [radius=2pt]
	     
 	     (a1)      circle [radius=1pt]
              (a2)     circle [radius=1pt]
              (a3)     circle [radius=1pt]
              (a4)     circle [radius=1pt]
               (a5)     circle [radius=1pt]
               (a6)     circle [radius=1pt]

              (b)       circle [radius=1pt]
              (c)       circle [radius=1pt];;;;
              
   \draw[thick, dash dot dot] (o) -- (b) -- cycle;
   \draw[thick, dash dot dot] (o) -- (c) -- cycle;
             
   \draw[thick, dash dot dot] (o) -- (a1) -- cycle;
  \draw[thick, dash dot dot] (o) -- (a2) -- cycle;
  \draw[thick, dash dot dot] (o) -- (a3) -- cycle;
  \draw[thick, dash dot dot] (o) -- (a4) -- cycle;
  \draw[thick, dash dot dot] (o) -- (a5) -- cycle;
  \draw[thick, dash dot dot] (o) -- (a6) -- cycle;
  
   \draw[thick, dash dot dot] (a1) -- (a2) -- cycle;
    \draw[thick, dash dot dot] (a2) -- (a3) -- cycle;
     \draw[thick, dash dot dot] (a3) -- (a4) -- cycle;
      \draw[thick, fill=black!30] (a4) -- (a5) -- cycle;
      \draw[thick, fill=black!30] (a5) -- (a6) -- cycle;
       \draw[thick, fill=black!30] (a6) -- (a1) -- cycle;

  
  \draw[thick, fill=black!30] (a1) -- (b);
   \draw[thick, dash dot dot] (a2) -- (b); 
   \draw[thick, dash dot dot] (a3) -- (b); 
   \draw[thick, fill=black!30] (a4) -- (b); 
    \draw[thick, fill=black!30] (a5) -- (b); 
     \draw[thick, fill=black!30] (a6) -- (b);

    \draw[thick, fill=black!30] (a1) -- (c); 
   \draw[thick, dash dot dot] (a2) -- (c); 
   \draw[thick, dash dot dot] (a3) -- (c); 
   \draw[thick, fill=black!30] (a4) -- (c); 
   \draw[thick, fill=black!30] (a5) -- (c); 
   \draw[thick, fill=black!30] (a6) -- (c);

   \coordinate[label=below:$a$] (oa) at (oa);

\end{tikzpicture}}
\quad\quad\quad
\subfloat[
The union of $T$ and tetrahedrons sharing a face with $T$ in $M$
]{\label{fig1:b}\begin{tikzpicture}[line width=0.3pt,scale=0.5]
 \coordinate (o) at (0,0);
 \coordinate (t0) at (0,1.2);
  \coordinate (oa) at (0-0.2,0-0.02);
 
\coordinate (a1) at (2.0000, 0.0000);
\coordinate (a2) at (1.0000, 0.35);
\coordinate (a3) at (-1.0000, 0.35);
\coordinate (a4) at (-2.0000, 0.0000);

\coordinate (a3o) at (-1.1000, 0.34);
\coordinate (a2o) at (0.9000, 0.485);

  \coordinate (b) at (0,3);
  \coordinate (c) at (0,-3);
 \filldraw [black]

	          (o)        circle [radius= 2pt]
	     
 	         (a1)      circle [radius=1.5pt]
              (a2)     circle [radius=1.5pt]
              (a3)     circle [radius=1.5pt]
              (a4)     circle [radius=1.5pt]

              (b)       circle [radius=1.5pt]
              (c)       circle [radius=1.5pt];;;;
              
   \draw[thick, fill=black!30] (o) -- (b) -- cycle;
   \draw[thick, fill=black!30] (o) -- (c) -- cycle;
             
   \draw[thick,fill=black!30] (o) -- (a1) -- cycle;
  \draw[thick, dash dot dot] (o) -- (a2) -- cycle;
  \draw[thick, dash dot dot] (o) -- (a3) -- cycle;
  \draw[thick, fill=black!30] (o) -- (a4) -- cycle;

   \draw[thick, dash dot dot] (a1) -- (a2) -- cycle;
   \draw[thick, dash dot dot] (a2) -- (a3) -- cycle;
   \draw[thick, dash dot dot] (a3) -- (a4) -- cycle;

   \draw[thick, fill=black!30] (a1) -- (b);
   \draw[thick, dash dot dot] (a2) -- (b); 
   \draw[thick, dash dot dot] (a3) -- (b); 
   \draw[thick, fill=black!30] (a4) -- (b);

   \draw[thick, fill=black!30] (a2) -- (c); 
   \draw[thick, fill=black!30] (a3) -- (c);

   \coordinate[label=left:$T$] (t0) at (t0);

  \coordinate[label=above:$a_1$] (a3o) at (a3o);
  \coordinate[label=right:$a_2$] (a2o) at (a2o);
  \coordinate[label=right:$a_3$] (b) at (b);
  \coordinate[label=below:$a_4$] (oa) at (oa);
  \coordinate[label=right:$a_7$] (c) at (c);
  \coordinate[label=below:$a_6$] (a4) at (a4);
  \coordinate[label=below:$a_5$] (a1) at (a1);

\end{tikzpicture}}
\quad\quad\quad\subfloat[The tetrahedron $T_m\subseteq M$ shares $F_m$ with $T$ for $m=1,2,3$]{\label{fig1:c}\begin{tikzpicture}[line width=0.3pt,scale=0.5]
\coordinate (o1) at (1.5+0,0-0.5);
\coordinate (a11) at (1.5+2.0000, 0.0000-0.5);
\coordinate (a12) at (1.5+1.0000, 0.35-0.5);
\coordinate (b1) at (1.2,2.5);
\coordinate (b1o) at (1.7,2);

\coordinate (o2) at (0,-0.02);
\coordinate (a22) at (1.0000, 0.35);
\coordinate (a23) at (-1.0000, 0.35);
\coordinate (b2) at (0,3);   
\coordinate (b2o) at (-0.3,2.5);

\coordinate (c1o) at (-0.02,1.2);
\coordinate (c2o) at (0.00,1.2);
\coordinate (c3o) at (0.15,-0.5);

\coordinate (o3) at (-1.5+0,0-0.5);
\coordinate (a33) at (-1.5-1.0000, 0.35-0.5);
\coordinate (a34) at (-1.5-2.0000, 0.0000-0.5);
\coordinate (b3) at (-1.2,2.5);   
\coordinate (b3o) at (-1.7,2);

\coordinate (o4) at (0,0-1.5);
\coordinate (a42) at (1.0000, 0.35-1.5);
\coordinate (a43) at (-1.0000, 0.35-1.5);
\coordinate (b4) at (0,-4.5);        
 \coordinate (b4o) at (0.3,-3.5);

 \filldraw [black]

	      (o1)        circle [radius=2pt]
 	      (a11)      circle [radius=1pt]
              (a12)     circle [radius=1pt]
              (b1)       circle [radius=1pt]

               (o2)        circle [radius=2pt]
 	      (a22)      circle [radius=1pt]
              (a23)     circle [radius=1pt]
              (b2)       circle [radius=1pt]

              (o3)        circle [radius=2pt]
 	      (a33)      circle [radius=1pt]
              (a34)     circle [radius=1pt]
              (b3)       circle [radius=1pt]

	      (o4)        circle [radius=2pt]
 	      (a42)      circle [radius=1pt]
              (a43)     circle [radius=1pt]
              (b4)       circle [radius=1pt];;;;

            \draw[thick, fill=black!30] (o1) -- (a11) -- cycle;
           \draw[thick, dash dot dot] (o1) -- (a12) -- cycle;
           \draw[thick, dash dot dot] (a11) -- (a12) -- cycle;
      
          \draw[thick, fill=black!30] (o1) -- (b1) -- cycle;
          \draw[thick, fill=black!30] (a11) -- (b1) -- cycle;
	 \draw[thick, dash dot dot] (a12) -- (b1) -- cycle;

   	 \draw[thick, fill=black!30] (o2) -- (a22) -- cycle;
    	 \draw[thick, fill=black!30] (o2) -- (a23) -- cycle;
      	\draw[thick, dash dot dot] (a22) -- (a23) -- cycle;
      
      	 \draw[thick, fill=black!30] (o2) -- (b2) -- cycle;
       	 \draw[thick, fill=black!30] (a22) -- (b2) -- cycle;
	 \draw[thick, fill=black!30] (a23) -- (b2) -- cycle;

   	 \draw[thick, dash dot dot] (o3) -- (a33) -- cycle;
    	 \draw[thick, fill=black!30] (o3) -- (a34) -- cycle;
      	\draw[thick, dash dot dot] (a33) -- (a34) -- cycle;
      
      	 \draw[thick, fill=black!30] (o3) -- (b3) -- cycle;
       	 \draw[thick, dash dot dot] (a33) -- (b3) -- cycle;
	 \draw[thick, fill=black!30] (a34) -- (b3) -- cycle;

   	 \draw[thick,fill=black!30] (o4) -- (a42) -- cycle;
    	 \draw[thick, fill=black!30] (o4) -- (a43) -- cycle;
      	\draw[thick, fill=black!30] (a42) -- (a43) -- cycle;
      
      	 \draw[thick, fill=black!30] (o4) -- (b4) -- cycle;
       	 \draw[thick, fill=black!30] (a42) -- (b4) -- cycle;
	 \draw[thick, fill=black!30] (a43) -- (b4) -- cycle;

	  \coordinate[label=below:$a_4$] (o1) at (o1);   
	  \coordinate[label=below right:$a_4$] (o2) at (o2);
	   \coordinate[label=below:$a_4$] (o3) at (o3);
	    \coordinate[label=above:$a_4$] (o4) at (o4);

  \coordinate[label=left:$T$] (0t) at (b2o);   
  \coordinate[label=right:$T_1$] (1t) at (b1o);   
   \coordinate[label=left:$T_2$] (2t) at (b3o);  
   \coordinate[label=right:$T_3$] (3t) at (b4o);  
 
    \coordinate[label=right:$F _1$] (1c) at (c1o);  
    \coordinate[label=left:$F _2$] (2c) at (c2o);  
     \coordinate[label=left:$F _3$] (3c) at (c3o);

\end{tikzpicture}}
\caption{A macro-element $M$ and tetrahedron $T\subseteq M$ with $a_1$, $a_2$, $a_3$, $a_4$ as the four vertices}
\label{fig1}
\end{figure}
Suppose that each tetrahedron $T\in \mathcal{T}_h$ has at least one vertex in the interior of $\Omega$. Assume that $a\in \mathcal{V}_h(\Omega)$ is an internal vertex. Let the union of tetrahedrons with $a$ as one of the vertices be a macro-element $M$; see Figure \ref{fig1:a} for an illustration. For a macro-element $M$, let $\mathcal{F}_h(M)$ be the set interior facets in $M$ with respect to $\mathcal{T}_h$. 
For $(\bm{V}_{h,4,D}^-, Q_{h,2})$, define the corresponding finite element spaces on the macro-element $M$ by
\begin{align*}
   \bm{V}_{0,M}:=\{\tv\in \textbf{L}^{2}(M)~|~&\tv|_T\in \bm{P}_4^-(T)~\text{for all}~ T\subseteq {M}, \\
   &\bm{v}~\text{is $M_{2}$-continuous on}~ M ~\text{and}~\text{$M_2$-zero on}~\partial M\},
\end{align*}
and 
\[
Q_{0,M}:=\left\{q \in L^{2}(M)~|~q|_T\in P_2(T)\quad \text{for all}~~T\subseteq M,\, ~~ \int_{M}q\, d{x}=0 \right\}.
\]
Besides, define
\[
N_{M}:=\left\{q \in Q_{0,M} ~|~\int_{M}\operatorname{div}_h\tv \, q\, d{x}=0 \text{ for all } \tv \in \bm{V}_{0,M}\right\}.
\]
In light of the abstract theory from \cite{stenberg1984analysis}, to prove the inf-sup condition, it suffices to prove $N_{M}=\{0\}$. To this end, a key property for functions in $N_M$ is presented in the following lemma.

\begin{lemma}
Given any tetrahedron $T\subseteq M$, let $a_1$, $a_2$, $a_3$, $a_4$ be the vertices of $T$ and $a_{ij}=(a_i+a_j)/2$, $1\leq i< j\leq 4$. For any function $q\in N_M$, there holds
\be\label{paij:pakl}
q|_T(a_{ij}) = q|_T(a_{kl}).
\ee
Here $i,j,k$, and $l$ are distinct integers in $\{1, 2, 3, 4\}$ with $i<j$ and $k<l$.
\end{lemma}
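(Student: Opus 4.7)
The plan is to reduce the claim to a single-element computation by constructing, for each tetrahedron $T\subseteq M$, a family of bubble test functions supported on $T$ that isolates $q|_T$, and then extracting the identity by expanding $q|_T$ in the Lagrange basis of $P_2(T)$. For any $\bm{\zeta}\in\mathbb{R}^3$ I would take $\bm{v}_{\bm{\zeta}}:=\mathfrak{b}_3\bm{\zeta}$ on $T$ and $\bm{v}_{\bm{\zeta}}:=\bm{0}$ on $M\setminus T$. Since $\mathfrak{b}_3\in P_3(T)\subset P_4^{-}(T)$ and, by Lemma \ref{mark3.4}, $\int_F\mathfrak{b}_3\,p\,ds=0$ for every $p\in P_2(F)$ and every $F\in\mathcal{F}(T)$, the function $\bm{v}_{\bm{\zeta}}$ is $M_2$-continuous across every face in $\mathcal{F}_h(M)$ and $M_2$-zero on $\partial M$; hence $\bm{v}_{\bm{\zeta}}\in\bm{V}_{0,M}$.

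Using $\bm{v}_{\bm{\zeta}}$ as a test function in the defining property of $N_M$ and integrating by parts on $T$ produces the boundary term $\int_{\partial T}\mathfrak{b}_3\,q\,(\bm{\zeta}\cdot\mathbf{n})\,ds$, which vanishes by Lemma \ref{mark3.4} since $q|_F\in P_2(F)$ and $\bm{\zeta}\cdot\mathbf{n}$ is constant on each face. Letting $\bm{\zeta}\in\mathbb{R}^3$ vary yields
\[
\int_T\mathfrak{b}_3\,\nabla q\,dx=\bm{0}.
\]

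Next I would expand $q|_T$ in the Lagrange basis,
\[
q|_T=\sum_{i=1}^{4}q(a_i)\lambda_i(2\lambda_i-1)+\sum_{1\le i<j\le 4}4\,q(a_{ij})\,\lambda_i\lambda_j,
\]
and evaluate the integral term by term. The key input is the symmetry identity $\int_T\mathfrak{b}_3\lambda_i\,dx=\tfrac14\int_T\mathfrak{b}_3\,dx$, which follows from the invariance of $\mathfrak{b}_3$ under permutations of $\lambda_1,\ldots,\lambda_4$ together with $\sum_i\lambda_i=1$. With this identity, the coefficient of $\nabla\lambda_i$ arising from the vertex term $q(a_i)(4\lambda_i-1)\nabla\lambda_i$ cancels exactly, while the edge-midpoint terms collapse to
\[
\int_T\mathfrak{b}_3\,dx\cdot\sum_{i=1}^{4}E_i\,\nabla\lambda_i=\bm{0},\qquad E_i:=\sum_{j\neq i}q(a_{ij}).
\]
Because $\int_T\mathfrak{b}_3\,dx\neq 0$ (the bubble $\mathfrak{b}_3$ is dual to the interior degree of freedom \eqref{P2elemdofb}) and the only linear relation among $\nabla\lambda_1,\ldots,\nabla\lambda_4$ is $\sum_i\nabla\lambda_i=0$, this forces $E_1=E_2=E_3=E_4$.

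The final step is purely algebraic: pairing $E_1=E_2$ with $E_3=E_4$ and adding and subtracting gives $q(a_{13})=q(a_{24})$ and $q(a_{14})=q(a_{23})$, after which $E_1=E_3$ yields $q(a_{12})=q(a_{34})$, which is precisely \eqref{paij:pakl}. The main obstacle in this plan is the cancellation of all the vertex contributions in $\int_T\mathfrak{b}_3\,\nabla q\,dx$; without the symmetry identity $\int_T\mathfrak{b}_3\lambda_i\,dx=\tfrac14\int_T\mathfrak{b}_3\,dx$ the resulting constraints would couple vertex and edge-midpoint values of $q$ and fall short of the opposite-edge equalities. It is precisely the symmetric structure of the bubble \eqref{bubulep3} that makes this single-element argument work, bypassing any interaction among neighbouring tetrahedra in $M$.
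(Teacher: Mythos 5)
Your proposal is correct and takes essentially the same route as the paper: test the defining identity of $N_M$ with $\mathfrak{b}_3\bm{\zeta}$ supported on a single $T$, expand $q|_T$ in the quadratic Lagrange basis, kill the vertex contributions via $\int_T\mathfrak{b}_3(4\lambda_i-1)\,dx=0$, and conclude from the fact that the only linear relation among the $\nabla\lambda_i$ (equivalently the face normals $\bm{\nu}_{F_i}$) is that they sum to zero. The paper merely phrases the final step through the linear independence of the sums $\bm{\nu}_{F_j}+\bm{\nu}_{F_k}$ rather than your quantities $E_i$, a cosmetic difference.
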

\begin{proof}
For $q\in N_M$, $q|_T\in P_2(T)$ is of the form
\be
\label{p:expression}
q|_T = \sum_{i = 1}^4\lambda_i(2\lambda_i-1)q(a_i)+\sum_{1\leq j<k\leq 4}4\lambda_j\lambda_k q(a_{jk}).
\ee
Let $F_i$ be the face of $T$ opposite to $a_i$ and $\bm{\nu}_{F_i}$ be the outward normal vector of face $F_i$ with magnitude $\|\bm{\nu}_{F_i}\|=2|F_i|$, $i=1, 2, 3, 4$.
Notice that 
\be\label{lambda:1:1}
0=\nabla \left(\sum_{i=1}^4\lambda_i\right)=-\frac{1}{3!|T|}\sum_{i=1}^4{\bm{\nu}}_{F_i}.
\ee
This implies 
\be\label{lambda:2}
\bm{\nu}_{F_{1}}+\bm{\nu}_{F_{2}}=-(\bm{\nu}_{F_{3}}+\bm{\nu}_{F_{4}}).
\ee
Taking the gradient of \eqref{p:expression}, and then by \eqref{lambda:1:1}, it holds
\begin{align}
\nabla (q|_T) = \frac{-1}{3!|T|}&\left(\sum_{i = 1}^4(4\lambda_i-1)q(a_i)\bm{\nu}_{F_i}+\sum_{1\leq j<k\leq 4}4(\lambda_{k}\bm{\nu}_{F_{j}}+\lambda_j\bm{\nu}_{F_k}) q(a_{jk})\right).\label{grad:p:lemma}
\end{align}
Define $\bm{v}_h\in V_{0,M}$ by $\bm{v}_h:=\mathfrak{b}_{3}{\bm{\zeta}}$
with some vector ${\bm{\zeta}}\in \mathbb{R}^3$. Notice the support of $\bm{v}_h$ is $T$. Thus, an integration by parts yields
\begin{equation*}
\begin{split}
0 = \int_{M}\operatorname{div}_h{\bm{v}}_h \, q\, d{x}=\int_{T}\operatorname{div}{\bm{v}}_h \, q\, d{x}&=\sum_{i=1}^4\int_{F_i}\left\{\!\!\!\left\{\bm{v}_h\cdot \frac{\bm{\nu}_{F_i}}{2|F_i|}\right\}\!\!\!\right\}[\![q]\!]\, d{s}-\int_{T} \bm{v}_h\cdot \nabla q\, d{x}\\
&=-\int_{T} \mathfrak{b}_{3}\bm{\zeta}\cdot \nabla q\, d{x}.
\end{split}
\end{equation*}
This and \eqref{grad:p:lemma} result in  
\be\label{equ:Bgradientp}
\begin{split}
    0&= \sum_{i = 1}^4\frac{({\bm{\zeta}}\cdot\bm{\nu}_{F_i})}{3!|T|} q(a_i)\int_T \mathfrak{b}_{3}(4\lambda_i-1)\, d{x}\\
    &+\frac{4}{3!|T|}\sum_{1\leq j<k\leq 4}q(a_{jk})\left(({\bm{\zeta}}\cdot\bm{\nu}_{F_{j}})\int_{T}\mathfrak{b}_{3}\lambda_{k}\, d{x}+({\bm{\zeta}}\cdot\bm{\nu}_{F_{k}})\int_T \mathfrak{b}_{3}\lambda_j\, d{x}\right).
\end{split}
\ee
Elementary calculations show $\int_T \mathfrak{b}_{3}(4\lambda_i-1)\, d{x}=0$ for $i=1, 2, 3, 4$. This and the definition of $\mathfrak{b}_3$ lead to a simplification of \eqref{equ:Bgradientp}:
\begin{equation*}
0=\sum_{1\leq j<k\leq 4}{\bm{\zeta}}\cdot q(a_{jk})\left(\bm{\nu}_{F_{j}}+\bm{\nu}_{F_{k}}\right).
\end{equation*}
Due to the arbitrariness of $\bm{\zeta}\in \mathbb{R}^3$ and \eqref{lambda:2}, there holds 
\begin{equation*}
\begin{split}
(q(a_{34})-q(a_{12}))(\bm{\nu}_{F_{3}}+\bm{\nu}_{F_{4}})
&+(q(a_{24})-q(a_{13}))(\bm{\nu}_{F_{2}}+\bm{\nu}_{F_{4}})\\
&+(q(a_{14})-q(a_{23}))(\bm{\nu}_{F_{1}}+\bm{\nu}_{F_{4}})=0.
\end{split}
\end{equation*}
The vectors $\bm{\nu}_{F_{3}}+\bm{\nu}_{F_{4}}$, $\bm{\nu}_{F_{2}}+\bm{\nu}_{F_{4}}$, and $\bm{\nu}_{F_{1}}+\bm{\nu}_{F_{4}}$ are linearly independent. Thus \eqref{paij:pakl} follows.
\end{proof}

The following result for $N_M$ is proved by virtue of the above property. Indeed, the discrete inf-sup condition can be derived from the following lemma immediately.
\begin{lemma}\label{Nmis0}
It holds that $N_M =\{0\}$.
\end{lemma}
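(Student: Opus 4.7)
Let $q\in N_M$. The plan is to use the preceding lemma to constrain $q|_T$ on each tetrahedron $T\subseteq M$, and then exploit additional carefully chosen test functions to eliminate the remaining freedom. By the preceding lemma, the equalities $q|_T(a_{ij})=q|_T(a_{kl})$ across opposite edges of $T$ force $q|_T$ to lie in the $7$-dimensional subspace of $P_2(T)$ parameterized by the four vertex values $q|_T(a_i)$ and three common edge-midpoint values $\alpha_m=q|_T(a_{ij})=q|_T(a_{kl})$.

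Next I would construct further test functions in $\bm{V}_{0,M}$ supported on pairs of adjacent tetrahedra in order to extract more equations. Since $\mathfrak{b}_3$ is (up to scaling) the only bubble inside $P_4^-(T)$ and it has already been exploited in the preceding lemma, additional information must come from multi-tetrahedron supports. For each face $F_m$ separating $T$ from a neighbor $T_m\subseteq M$, I would build $\bm{v}_h\in\bm{V}_{0,M}$ supported in $T\cup T_m$ by exploiting the face-bubble structure of cubics of the form $\lambda_i\lambda_j\lambda_k$, whose trace vanishes on three faces of $T$, together with the quartic enrichers appearing in \eqref{shape:2}, matching moments across $F_m$ while keeping $M_2$-zero on the remaining faces. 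Then $\int_M\operatorname{div}_h\bm{v}_h\cdot q\,dx=0$, combined with integration by parts and the $M_2$-continuity of $\bm{v}_h$, eliminates all face contributions except one of the form $\int_{F_m}\mathbf{n}_T\cdot\bm{v}_h|_T\,[\![q]\!]\,ds$, yielding an equation relating $[\![q]\!]|_{F_m}$ to weighted volume integrals of $\nabla q|_T$ and $\nabla q|_{T_m}$.

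Running through a sufficient family of such test functions (varying $F_m$, the cubic or quartic generator, and a constant vector $\bm{\zeta}\in\mathbb{R}^3$), together with the midpoint equalities from the preceding lemma, should force $q|_T$ to be constant on every $T\subseteq M$. With each $q|_T$ constant, $\nabla q$ vanishes on every tetrahedron and the same pair-supported test functions now directly force $[\![q]\!]=0$ on every interior face of $M$, so the piecewise constants agree across neighbors; since $M$ is connected through the common vertex $a$, $q$ is globally constant on $M$, and the mean-zero condition $\int_M q\,dx=0$ in the definition of $Q_{0,M}$ finally yields $q\equiv 0$.

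The main obstacle is the algebraic bookkeeping in the middle step: the shape function space $P_4^-(T)$ in \eqref{shape:2} is not symmetric under all index permutations, and one must verify that the constructed family of pair-supported test functions produces a rank-sufficient system of constraints on the seven-dimensional reduced subspace given by the preceding lemma. This is precisely where the ``specially defined bubble functions'' alluded to in the introduction play their key role.
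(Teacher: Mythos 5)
Your skeleton coincides with the paper's: reduce $q|_T$ to seven parameters via \eqref{paij:pakl}, test \eqref{pM} against functions supported on $T\cup T_m$ to force $q|_T$ constant, then use face bubbles to kill the jumps and invoke the mean-zero condition. The opening and closing stages are fine, but the middle stage --- the actual content of the lemma --- is left as an acknowledged ``obstacle,'' and in the form you describe it would not go through. You propose test functions whose pairing with $q$ yields ``an equation relating $[\![q]\!]|_{F_m}$ to weighted volume integrals of $\nabla q|_T$ and $\nabla q|_{T_m}$.'' Such equations couple the unknowns on $T$ to those on its neighbours and to the face jumps, and you give no reason why the resulting global system on $M$ should have trivial kernel. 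The paper avoids this entirely by a decoupling device absent from your plan: it takes $\bm{v}_h=\psi\,\bm{\nu}_{F_m}^{\perp}$, where $\psi$ is a combination of the basis functions $\phi_{m,i,j}$ dual to the face DoFs \eqref{P2elemdofa}. The factor $\bm{\nu}_{F_m}^{\perp}$ makes $\bm{v}_h\cdot\bm{\nu}_{F_m}=0$, so the jump term on $F_m$ disappears, and the particular combinations $2\phi_{1,1,2}+\phi_{1,1,3}$, $\phi_{2,1,2}+4\phi_{2,2,2}$, $6\phi_{3,1,1}+2\phi_{3,1,2}+\phi_{3,1,3}$ are chosen so that all zeroth and first moments on the neighbour $T_m$ vanish, so $\nabla q|_{T_m}$ contributes nothing either. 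Each test function then gives a linear constraint on the seven coefficients of $q|_T$ alone.

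Even granting such a decoupling, the claim that a ``sufficient family'' of pair-supported test functions is ``rank-sufficient'' is precisely what must be proved, and it is not automatic: $P_4^-(T)$ is not permutation-symmetric, only three of the four faces of $T$ are interior to $M$, and every admissible test function has vanishing volume mean by \eqref{P2elemdofb}, so the available constraints are genuinely limited. The paper closes this by explicitly computing the first moments $\frac{1}{|T|}\int_T\psi\lambda_i\,dx$ for the three combinations above, obtaining the six equations \eqref{leq:1}, \eqref{leq:2}, \eqref{leq:3}, which together with \eqref{paij:pakl} force $c_1=c_2=c_3=c_4=c_{12}=c_{13}=c_{23}$. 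Exhibiting such a family and verifying this rank condition is the heart of the lemma, not bookkeeping that can be deferred; as written, your argument has a genuine gap at exactly this point.
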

\begin{proof}
Given a tetrahedron $T\subseteq M$, let $a_1,a_2,a_3$, and $a_4$ be the four vertices of $T$. For $s= 1, 2, 3, 4$, let $F_s$ be the face of $T$ opposite to $a_s$, and $\bm{\nu}_{F_s}$ be its outward normal vector with magnitude $\|\bm{\nu}_{F_s}\|=2|F_s|$. Among these four facets, three of them are interior facets which are assumed to be $F_m$, $m=1, 2, 3$. Let $T_m\subseteq M$ be the tetrahedron which shares $F_m$ with $T$, and $a_{m+4}$ be the vertex opposite to $F_m$ in $T_m$. See Figure \ref{fig1:b}--\ref{fig1:c} for an illustration. 

Let $q\in N_{M}$, the formulae of $q$ on $T$ is
\be\label{formofp}
q|_{T}: = \sum_{i = 1}^4 c_i \lambda_{i}(2\lambda_{i}-1)+\sum_{1\leq j<k\leq 4}4c_{jk}\lambda_j\lambda_k,
\ee
with associated parameters $c_i$, $c_{jk}$ and the $i$-th barycentric coordinate $\lambda_{i}$ of $T$. The property \eqref{paij:pakl} implies $c_{12}= c_{34}, c_{13}=c_{24}, c_{23}=c_{14}$.
Therefore, it suffices to prove $c_1$, $c_2$, $c_3$, $c_4$, $c_{12}$, $c_{13}$, and $c_{23}$ are zero. 

To this end, let
$a^{m}_1$, $a^{m}_2$, $a^{m}_3$, $a^{m}_4$ be the four vertices of the tetrahedron $T_{m}$ for $m=1, 2, 3$, where 
\be\label{vertexSort}
\begin{aligned}
a^{1}_{1} =a_5, ~~a^{1}_{2} = a_2,~~ a^{1}_{3} = a_{4},~~ a^{1}_{4} = a_{3},\\
a^{2}_{1} =a_4, ~~a^{2}_{2} = a_6,~~ a^{2}_{3} = a_{3},~~ a^{2}_{4} = a_{1},\\
a^{3}_{1} =a_2, ~~a^{3}_{2} = a_1,~~ a^{3}_{3} = a_{7},~~ a^{3}_{4} = a_{4}.
\end{aligned}
\ee
Let $b^m_k = a_{(m+k-1)\operatorname{mod}4+1}$ be the $k$-th vertex of $F_{m}$ for $k=1, 2, 3$.
For $F_m=T\cap T_m$, let $\phi_{m,i,j}\in \bm{V}_{0,M}$ be the basis function dual to the degrees of freedom \eqref{P2elemdofa} on $F_m$, here $i,j\in\{1,2,3\}$ and $i\leq j$. Thus, the support of $\phi_{m,i,j}$ is $T\cup T_m$. For $T'$ being $T$ and $T_m$, $\phi_{m,i,j}|_{T'}\in P_4^-(T')$ satisfy 
\begin{align}
   &\frac{1}{\vert{F_m}\vert}\int_{F_m}\phi_{m,i,j}|_{T'}\, \lambda_{F_m,k}\lambda_{F_m,l}\, d{s}=\delta_{ik}\delta_{jl},\quad k, l\in \{1, 2, 3\},\label{facebasis:bb} \\ 
   &\frac{1}{\vert{F}\vert}\int_{F}\phi_{m,i,j}|_{T'}\,q\, d{s}=0, ~\text{for all}\, q\in P_2(F),~ F\in\mathcal{F}(T')\setminus\{F_m\}, \\
   &\frac{1}{|T'|}\int_{T'}\phi_{m,i,j}|_{T'}\, d{x}=0,\label{zeromoments}
\end{align}
where $\lambda_{F_m, k}$ is the $k$-th barycentric coordinate of $F_m$. In reality, $\lambda_{F_m, k}$ is equivalent to $\lambda_{(m+k-1) \operatorname{mod}4+1}$ of $T$ on $F_m$.

For $q\in N_M$, an integration by parts leads to 
\begin{align}
   0=\int_{M}q \operatorname{div}_h\bm{v}_h\, d{x}=\sum_{F\in \mathcal{F}_h(M)}\int_{F}\left\{\!\!\!\left\{{\bm{v}}_h\cdot\frac{\bm{\nu}_{F}}{2|F|}\right\}\!\!\!\right\}[\![q]\!]\, d{s}-\sum_{T'\subset M}\int_{T'}{\bm{v}}_h\cdot\nabla q\, d{x} \label{pM}
\end{align}
for all $\bm{v}_h\in \bm{V}_{0,M}$. In particular, some vector valued functions in $\bm{V}_{0,M}$ are defined to prove $q$ is zero on the macro-element $M$.

Firstly, take
${\bm{v}}_h:=(2\phi_{1,1,2}+\phi_{1,1,3})\bm{\nu}_{F_1}^\perp$. Since 
$F_1=T\cap T_1$, $(2\phi_{1,1,2}+\phi_{1,1,3})$ vanishes except on $T\cup T_1$. Besides, it can be verified by  calculations that all of the average and the first order moments of $(2\phi_{1,1,2}+\phi_{1,1,3})$ on $T_1$ vanish, while 
\begin{align}
&\frac{1}{|T|}\int_{T}(2\phi_{1,1,2}+\phi_{1,1,3})\lambda_1\,d{x} = -\frac{27}{4}, ~&&\frac{1}{|T|}\int_{T}(2\phi_{1,1,2}+\phi_{1,1,3})\lambda_2 \,d{x}= \frac{9}{4},\label{firstmoments}\\
&\frac{1}{|T|}\int_{T}(2\phi_{1,1,2}+\phi_{1,1,3})\lambda_3\,d{x} = \frac{9}{4} ,~&&\frac{1}{|T|}\int_{T}(2\phi_{1,1,2}+\phi_{1,1,3})\lambda_4\,d{x} = \frac{9}{4}.\label{firstmoments1}
\end{align}
Thus there holds 
\be
\label{eq:bb:1}
\begin{split}
0&=\sum_{F\in \mathcal{F}_h(M)}\int_{F}\left\{\!\!\!\left\{{\bm{v}}_h\cdot\frac{\bm{\nu}_{F}}{2|F|}\right\}\!\!\!\right\}[\![q]\!]\, d{s}-\sum_{T'\subseteq M}\int_{T'}{\bm{v}}_h\cdot\nabla q\, d{x}\\
&= \int_{F_1}\left\{\!\!\!\left\{{\bm{v}}_h\cdot\frac{\bm{\nu}_{F_1}}{2|F_1|}\right\}\!\!\!\right\}[\![q]\!]\, d{s}-\int_{T}{\bm{v}}_h\cdot\nabla q\, d{x}\\
&=-\int_{T}((2\phi_{1,1,2}+\phi_{1,1,3})\bm{\nu}_{F_1}^\perp)\cdot\nabla q\, d{x}.
\end{split}
\ee
Taking the gradient of \eqref{formofp}, and recall 
$\nabla \lambda_i=-\frac{1}{3!|T|}{\bm{\nu}}_{F_i}$
introduced in \eqref{lambda:1}. The formulae of $\nabla (q|_T)$ is 
\begin{align}
    \nabla (q|_{T}) = \frac{-1}{3!|T|}\left(\sum_{i = 1}^4(4\lambda_i-1)c_i\bm{\nu}_{F_i}+\sum_{1\leq j<k\leq 4}4(\lambda_{k}\bm{\nu}_{F_{j}}+\lambda_j\bm{\nu}_{F_k}) c_{jk}\right).\label{gradientpont0}
\end{align}
This and \eqref{eq:bb:1} lead to 
\begin{align*}
    0= &\frac{\bm{\nu}_{F_1}^\perp}{3!|T|}\cdot\left(\sum_{i = 1}^4 c_i\, \bm{\nu}_{F_i} \int_{T}(2\phi_{1,1,2}+\phi_{1,1,3})(4\lambda_i-1)\, d{x}\right.\\
    &\qquad\left.+4\sum_{1\leq j<k\leq 4}\int_{T}(2\phi_{1,1,2}+\phi_{1,1,3})(\lambda_{k}\bm{\nu}_{F_{j}}+\lambda_j\bm{\nu}_{F_k}) c_{jk}\, d{x}\right).
\end{align*}
Then \eqref{zeromoments} and \eqref{firstmoments}--\eqref{firstmoments1} yield
\begin{equation*} 
\begin{split}
0=\bm{\nu}_{F_1}^\perp\cdot &\left((-3c_1+c_{12}+c_{13}+c_{23})\bm{\nu}_{F_1}+(c_2-3c_{12}+c_{13}+c_{23})\bm{\nu}_{F_2}\right.\\
&+\left.(c_3+c_{12}-3c_{13}+c_{23})\bm{\nu}_{F_3}+(c_4+c_{12}+c_{13}-3c_{23})\bm{\nu}_{F_4}\right).
\end{split}
\end{equation*}
This plus \eqref{lambda:2} gives rise to 
\begin{align}
   c_{2}-c_{3}-4c_{12}+4c_{13}=0,~~~ c_{2}-c_{4}-4c_{12}+4c_{23}=0. \label{leq:1}
\end{align}

Secondly, take $\bm{v}_h:= (\phi_{2,1,2}+4\phi_{2,2,2})\bm{\nu}_{F_2}^\perp$ in \eqref{pM}. Since $F_2=T\cap T_2$, the support of $\phi_{2, i, j}$ is $T\cup T_2$. Furthermore, it can be verified by elementary calculations that all of the average and the first order moments of $(\phi_{2,1,2}+4\phi_{2,2,2})$ on $T_2$ vanish. Thus, \eqref{pM} is simplified to 
\[0=\int_{T}((\phi_{2,1,2}+4\phi_{2,2,2})\bm{\nu}_{F_2 }^\perp)\cdot\nabla q\, d{x}.\]
This and the formulae of $\nabla q$ on $T$ from \eqref{gradientpont0} lead to 
\begin{equation}\label{secondly:1}
\begin{aligned}
0= \frac{\bm{\nu}_{F_2}^\perp}{3!|T|}\cdot(&\sum_{i = 1}^4  c_i \, \bm{\nu}_{F_i}\int_{T}(\phi_{2,1,2}+4\phi_{2,2,2})(4\lambda _i-1)\, d{x}\\
&+\sum_{1\leq j<k\leq 4}4\int_{T}(\phi_{2,1,2}+4\phi_{2,2,2})(\lambda _{k}\bm{\nu}_{F_{j}}+\lambda _j\bm{\nu}_{F_k}) c_{jk} \, d{x}).
\end{aligned}
\end{equation}
Since the value of 
$\frac{1}{|T|}\int_{T}(\phi_{2,1,2}+4\phi_{2,2,2})\lambda_i\,d{x}$ is ${11}/{4}$,\,  $-{1}/{4}$,\,  $-{5}/{4}$,\, and $-{5}/{4}$ for $i$ being 1, 2, 3, and 4, respectively. This, \eqref{zeromoments}, \eqref{lambda:2}, and \eqref{secondly:1} result in
\begin{align}
5(c_{4} -c_{3} )+12(c_{13} -c_{23})=0,\quad 11c_{1}+5c_{3} +4(c_{12} -c_{23})-16c_{13} =0.\label{leq:2} 
\end{align}

Thirdly, take $\bm{v}_h:= (6\phi _{3,1,1}+2\phi _{3,1,2}+\phi _{3,1,3})\bm{\nu}_{F_3 }^\perp$ in \eqref{pM}. Since $F_3 =T\cap T_3$, the support of $\phi _{3, i, j}$ is $T\cup T_3$. Furthermore, it can be verified that all of the average and the first order moments of $(6\phi _{3,1,1}+2\phi _{3,1,2}+\phi _{3,1,3})$ on $T_3$ vanish, while on $T$, the value of $\frac{1}{|T|}\int_{T}(6\phi _{3,1,1}+2\phi _{3,1,2}+\phi _{3,1,3})\lambda_i\,d{x}$ is $-{27}/{4}$,\,  $-{3}/{4}$,\,  ${9}/{4}$,\, and ${21}/{4}$ for $i$ being 1, 2, 3, and 4, respectively. 
By similar arguments as in the last two regimes where $\bm{v}_h$ are defined by the basis functions on $F _1$ and  $F _2$, respectively, it can be derived that
\begin{align}
9c_{1} -c_{2} -8c_{12}-4(c_{23} -c_{13})=0,~ c_{2} +7c_{4} -8c_{13} -12(c_{23} -c_{12} ) =0. \label{leq:3}
\end{align}

Finally, \eqref{leq:1}, \eqref{leq:2}, and \eqref{leq:3} bring $c_1=c_2=c_3=c_4=c_{12}=c_{13}=c_{23}$. Hence $q$ is a constant on $T$, and the symmetry of the macro-element $M$ shows that $q$ is a constant on any tetrahedron $T\subseteq M$. Thus, the equation \eqref{pM} turns into
\be\label{new:pM}
0=\int_{M}q \operatorname{div}_h\bm{v}_h\, d{x}=\sum_{F\in \mathcal{F}_h(M)}\int_{F}\left\{\!\!\!\left\{{\bm{v}}_h\cdot\frac{\bm{\nu}_{F}}{2|F|}\right\}\!\!\!\right\}[\![q]\!]\, d{s}.
\ee
For $F\in\mathcal{F}_h(M)$ with $\lambda_a$, $\lambda_b$, $\lambda_c$ as the three barycentric coordinates, define a face bubble function by
$B_{F,3}:=27\lambda_a\lambda_b\lambda_c$. Take ${\bm{v}}_h:=B_{F,3}\frac{\bm{\nu}_F}{2|F|}$ in \eqref{new:pM}, there holds
\begin{equation*}
\int_{F}B_{F,3}[\![q]\!]\, d{s}=[\![q]\!]=0.
\end{equation*}
This implies that $q$ is a constant on $M$, and $\int_M q\, d{x}=0$ shows the constant is zero. Consequently, $N_M=\{0\}$.
\end{proof}

\begin{theorem}\label{BB:plus}
For $k=2, 3$, there exists a positive constant $c$ independent of the mesh-size such that 
\be
\label{infcon:plus}
\sup_{\tv\in \bm{V}_{h,k+1,D}^-}\frac{b_{h}(q,\tv)}{\|\tv\|_{1,h}}\geq c \|q\|_{0} \quad \text{ for all }~~ q\in Q_{h,k-1}/\mathbb{R},
\ee
where $Q_{h,k-1}/\mathbb{R}:=\{q\in Q_{h,k-1}:\int_{\Omega}q\, d{x}=0\}$.
\end{theorem}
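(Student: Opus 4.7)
The plan is to treat the two cases $k=2$ and $k=3$ by qualitatively different means, since the single mean-value interior DoF of the reduced element is strong enough to control $Q_{h,1}$ pressures but too weak for $Q_{h,2}$ pressures.

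For $k=2$ I would construct a Fortin-type operator $\Pi_h^-:\bm{H}_D^1(\Omega)\to \bm{V}_{h,3,D}^-$ by
\begin{equation*}
\int_F \Pi_h^-\bm{v}\cdot\bm{q}\,ds=\int_F\bm{v}\cdot\bm{q}\,ds\quad\text{for all }\bm{q}\in\bm{P}_1(F),\ F\in\mathcal{F}_h(\Omega)\cup\mathcal{F}_h(\Gamma_D),
\end{equation*}
and $\int_T\Pi_h^-\bm{v}\,dx=\int_T\bm{v}\,dx$ on each $T\in\mathcal{T}_h$ (Neumann-face moments being assigned in any admissible way). The unisolvence from Theorem \ref{P2elemThm} makes $\Pi_h^-$ well defined; affine scaling yields $\|\Pi_h^-\bm{v}\|_{1,h}\lesssim\|\bm{v}\|_1$. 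For any $q_h\in Q_{h,1}$, elementwise integration by parts together with $\nabla q_h|_T\in\bm{P}_0(T)$ and $q_h\bm{n}|_F\in\bm{P}_1(F)$ gives the commuting identity $b_h(\Pi_h^-\bm{v},q_h)=b(\bm{v},q_h)$. The abstract Fortin lemma \cite[Proposition 2.8]{brezzi2012mixed} combined with the continuous inf-sup condition on $\bm{H}_D^1(\Omega)\times L^2(\Omega)/\mathbb{R}$ then delivers \eqref{infcon:plus}.

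For $k=3$ the Fortin strategy breaks down: since $\nabla q_h|_T\in \bm{P}_1(T)$, the volume term $\int_T(\Pi_h^-\bm{v}-\bm{v})\cdot\nabla q_h\,dx$ no longer vanishes, and the reduced element has no higher-order interior DoF to kill it. I would instead invoke the macro-element framework of Stenberg \cite{stenberg1984analysis}: cover $\Omega$ by the macro-elements $\{M\}$ centered at interior vertices $a\in\mathcal{V}_h(\Omega)$ with the local spaces $\bm{V}_{0,M}$ and $Q_{0,M}$ already introduced, verify that the trivial-kernel hypothesis $N_M=\{0\}$ holds on each reference configuration, and then invoke Stenberg's abstract theorem to assemble a global inf-sup bound with mesh-independent constant via quasi-uniformity and affine scaling. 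The trivial-kernel hypothesis is exactly Lemma \ref{Nmis0}, so after that lemma is established the argument is a direct application of the black-box machinery.

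The genuinely hard step is $N_M=\{0\}$, which is already dispatched by the preceding lemmas: the identity \eqref{paij:pakl} cuts the local pressure to essentially seven parameters per tetrahedron; three judiciously chosen macro-element test functions built from the bubble $\mathfrak{b}_3$ and the face basis functions $\phi_{m,i,j}$ yield enough linear relations to force $q$ to be a tetrahedron-wise constant; and a face-bubble test against $B_{F,3}=27\lambda_a\lambda_b\lambda_c$ eliminates the remaining jumps across interior facets of $M$, leaving only the zero-mean constant. With $N_M=\{0\}$ in hand, Theorem \ref{BB:plus} follows by the Fortin recipe for $k=2$ and Stenberg's framework for $k=3$; the only ingredient one must actively design is the trio of macro-element test functions for $k=3$, and this is precisely the labor of Lemma \ref{Nmis0}.
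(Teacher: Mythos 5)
Your proposal is correct and follows essentially the same route as the paper: a Fortin operator built from the DoFs \eqref{P2elemdofa}--\eqref{P2elemdofb} for $k=2$ (the paper states this possibility just before the macro-element discussion and invokes \cite[Proposition 2.8]{brezzi2012mixed}), and Stenberg's macro-element framework with Lemma \ref{Nmis0} for $k=3$. You merely make explicit the commuting identity and scaling estimates that the paper leaves implicit.
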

\begin{proof}
For $k=2$, the Fortin operator can be constructed, thus \eqref{infcon:plus} follows from \cite[Proposition 2.8]{brezzi2012mixed}. For $k=3$, Lemma \ref{Nmis0} shows $N_M=\{0\}$. As a result, \eqref{infcon} follows from \cite[Theorem 3.1]{stenberg1984analysis}.
\end{proof}

\section{The error analysis}
 The error estimates are presented in this section.

\begin{theorem}
For $k=2, 3$, let $(\bm{u},p)\in \bm{H}^{k+1}(\Omega)\times H^k(\Omega)$ be the solution of \eqref{eq1}. 
The problem \eqref{eqb} admits a unique solution $(\bm{u}_h,p_h)\in \bm{V}_{h,D}\times Q_h$, 
where $\bm{V}_{h,D}\times Q_h$ is either of $\bm{V}_{h,k+1,D}\times Q_{h,k}$ and $\bm{V}_{h,k+1,D}^-\times Q_{h,k-1}$. Furthermore, there holds
\begin{align}
 &\vert{\bm{u}-\bm{u}_h}\vert_{1,h}+\Vert{p-p_h}\Vert_{0}\lesssim h^k(\vert{\bm{u}}\vert_{k+1}+\vert{p}\vert_k),   \label{gj1}\\
 &\Vert{\bm{u}-\bm{u}_h}\Vert_{0,h}+\Vert{p-p_h}\Vert_{-1}\lesssim h^{k+1}(\vert{\bm{u}}\vert_{k+1}+\vert{p}\vert_k).\label{gj2}
\end{align}
Here $\|\cdot\|_{-1}$ denotes the norm of the dual space of $H^1(\Omega)\cap L_0^2(\Omega)$.
\end{theorem}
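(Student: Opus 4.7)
The plan is to combine the abstract mixed finite element theory with a Strang-type consistency analysis, using the tools already assembled earlier in the paper. First I would verify the hypotheses of Brezzi's theorem for each of the two pairs $(\bm{V}_{h,k+1,D}, Q_{h,k})$ and $(\bm{V}_{h,k+1,D}^-, Q_{h,k-1})$. The discrete inf-sup condition is exactly Theorems \ref{BB} and \ref{BB:plus}. The remaining ingredient is the discrete Korn inequality on $\bm{V}_{h,D}$, which follows from \cite{brenner2004korn}: the $M_{k-1}$-continuity on interior faces together with the $M_{k-1}$-zero condition on $\Gamma_D$ is enough to guarantee $|\tv|_{1,h}^2 \lesssim \sum_{T\in\mathcal{T}_h}\|\varepsilon(\tv)\|_{0,T}^2$ for every $\tv \in \bm{V}_{h,D}$. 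Coercivity of $a_h$ on the kernel of $b_h$ together with the discrete inf-sup condition then yields existence, uniqueness, and the usual stability bound for $(\bm{u}_h, p_h)$.

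For the energy estimate \eqref{gj1}, I would invoke the nonconforming variant of \cite[Proposition~5.5.6]{boffi2013mixed}, so that
\begin{equation*}
|\bm{u}-\bm{u}_h|_{1,h} + \|p-p_h\|_{0} \lesssim \inf_{\tv_h\in\bm{V}_{h,D}}|\bm{u}-\tv_h|_{1,h} + \inf_{q_h\in Q_h}\|p-q_h\|_0 + E_h(\bm{u},p),
\end{equation*}
where the consistency residual is
\begin{equation*}
E_h(\bm{u},p) := \sup_{\bm{0}\ne \tv_h\in\bm{V}_{h,D}}\frac{|a_h(\bm{u},\tv_h)+b_h(\tv_h,p)-(\bm{f},\tv_h) - (\bm{g},\tv_h)_{\Gamma_N}|}{|\tv_h|_{1,h}}.
\end{equation*}
The two approximation terms are of order $h^k$ by the Bramble-Hilbert lemma applied to the canonical interpolants associated with the DoFs \eqref{facemomentP3}--\eqref{momentP3} (respectively \eqref{P2elemdofa}--\eqref{P2elemdofb}) and to the $L^2$-projection onto $Q_h$. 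For $E_h$, an element-wise integration by parts recasts the residual as a sum of facet integrals involving $\sigma\bm{n}_F$ tested against $[\![\tv_h]\!]$; the $M_{k-1}$-continuity of $\tv_h$ lets one subtract the facetwise $\bm{P}_{k-1}(F)$-projection of $\sigma\bm{n}_F$ at no cost, and the standard trace/scaling argument of \cite[Lemma~3]{m1973p} then yields $E_h(\bm{u},p) \lesssim h^k(|\bm{u}|_{k+1}+|p|_k)$.

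The $L^2$ estimates in \eqref{gj2} are obtained by duality. For the velocity I would introduce the auxiliary adjoint Stokes problem with right-hand side $\bm{u}-\bm{u}_h$ and invoke elliptic regularity $|\bm{\psi}|_2 + |r|_1 \lesssim \|\bm{u}-\bm{u}_h\|_{0,h}$. Testing the adjoint equation with $\bm{u}-\bm{u}_h$, subtracting the canonical interpolant of $\bm{\psi}$, and using the already-proved energy bound gives
\begin{equation*}
\|\bm{u}-\bm{u}_h\|_{0,h}^2 \lesssim \bigl(|\bm{u}-\bm{u}_h|_{1,h}+\|p-p_h\|_0\bigr)\cdot h\,(|\bm{\psi}|_2+|r|_1) + R_h,
\end{equation*}
where $R_h$ gathers the nonconforming consistency terms. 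These once again admit an extra factor of $h$ because one can subtract the facetwise $\bm{P}_{k-1}$-projection of the adjoint stress before applying Cauchy-Schwarz. The estimate of $\|p-p_h\|_{-1}$ follows by pairing $p-p_h$ with an arbitrary $\tv\in\bm{H}^1_D(\Omega)$ with $\|\tv\|_1\le 1$, writing $b(\tv,p-p_h) = a_h(\bm{u}_h-\bm{u},\tv)$ plus a jump residual, and then inserting \eqref{gj1} together with the same facet-projection trick.

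I expect the principal obstacle to be the bookkeeping of the consistency estimate for the reduced pair $(\bm{V}_{h,k+1,D}^-, Q_{h,k-1})$: one must be careful to choose the facet polynomial projection compatibly with the $M_{k-1}$-continuity so that the jump of $\tv_h$ can kill the optimal polynomial part of $\sigma\bm{n}_F$, while still matching the reduced pressure order. Once this polynomial matching is fixed, the remaining work is genuinely routine: Bramble-Hilbert on the local interpolant, scaled trace inequalities on each facet, and a final Cauchy-Schwarz sum over $\mathcal{T}_h$, exactly as in the cited references.
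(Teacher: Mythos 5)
Your proposal is correct and follows essentially the same route as the paper: well-posedness from the discrete Korn inequality of \cite{brenner2004korn} combined with the inf-sup conditions of Theorems \ref{BB} and \ref{BB:plus}, and the error bounds from the abstract nonconforming mixed theory of \cite[Proposition 5.5.6]{boffi2013mixed} together with the consistency estimate of \cite[Lemma 3]{m1973p}. The paper simply delegates the approximation, consistency, and duality details to those citations, whereas you spell them out explicitly.
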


\begin{proof} 
The $M_{k-1}$-continuity of $\bm{V}_{h,D}$ implies the discrete Korn's inequality in light of the analysis in \cite{brenner2004korn}. This plus the discrete inf-sup condition \eqref{infcon} and \eqref{infcon:plus} shows the problem admits a unique solution.
Then the estimates \eqref{gj1}--\eqref{gj2} follow from the abstract error estimates of mixed nonconforming finite elements \cite[Proposition 5.5.6]{boffi2013mixed} and the consistency error estimates \cite[Lemma 3]{m1973p}.
\end{proof}

\section{Numerical experiments}
The numerical experiments are presented in this section to verify the error analysis and convergence results. 
 The computation domain is the cube $\Omega=(0,1)^3$ with Neumann boundary $\Gamma_N=\{z=1\}\cap \partial\Omega$ and Dirichlet boundary $\Gamma_D=\partial\Omega\setminus\Gamma_N$. The load function $\bm{f}=-2\mu \operatorname{div}\varepsilon(\bm{u})+\nabla p$ in \eqref{eq1} is derived by the following exact solutions:
\begin{align*}
    \bm{u}=\operatorname{curl}\begin{pmatrix}
    y^2(1-y)^2x(1-x)z^2(1-z)^3\\
    x^2(1-x)^2y(1-y)z^2(1-z)^3\\
    0
    \end{pmatrix},\quad p =(x-\frac{1}{2})(y-\frac{1}{2})(1-z).
\end{align*}
In this regime, it can be verified that $\bm{g}|_{\Gamma_N}=(\varepsilon(\bm{u})-p\bm{I})\bm{\nu}|_{\Gamma_N}=0$, and $\bm{u}|_{\Gamma_{D}}=\bm{u}_D=0$.


Both of $(\bm{V}_{h,k+1,D}, Q_{h,k})$ and $(\bm{V}_{h,k+1,D}^-, Q_{h,k-1})$ with $k=2, 3$ are exploited for \eqref{eqb}. The numerical results are performed in Figure \ref{fig:rate} with \cite{yfem} used in the computation.
\begin{figure}[h]
\captionsetup[subfigure]{font=footnotesize}
\centering
\subfloat[The convergence rate of $\|\bm{u}-\bm{u}_h\|_{1,h}$.]{\label{converfig:a} \includegraphics[width=0.5\textwidth]{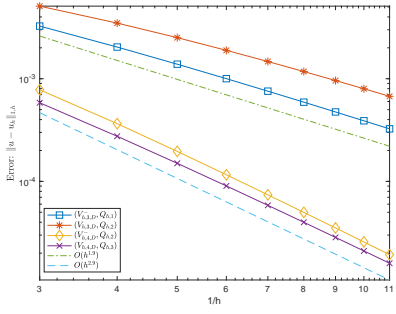}}
\subfloat[The convergence rate of $\|p-p_h\|_{0}$.]{\label{converfig:b}\includegraphics[width=0.5\textwidth]{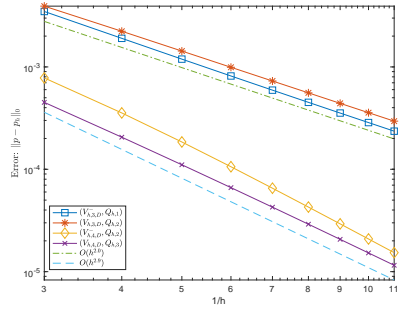}}\\
\subfloat[The convergence rate of $\|\bm{u}-\bm{u}_h\|_{0}$.]{\label{converfig:c}\includegraphics[width=0.5\textwidth]{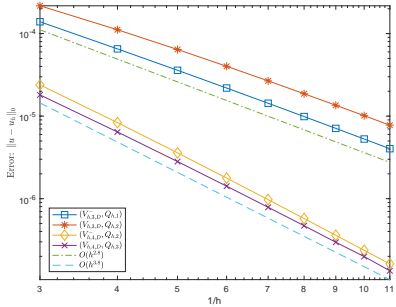}}
\subfloat[The number of DoFs.]{\label{converfig:d}\includegraphics[width=0.5\textwidth]{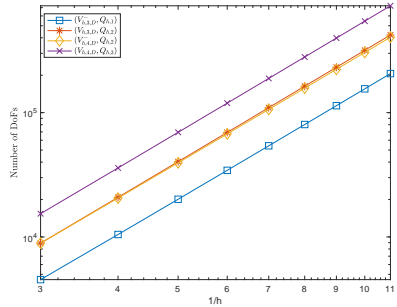}}
\caption{Convergence rates and the number of DoFs}
\label{fig:rate}
\end{figure}

As the estimates presented in \eqref{gj1}, it can be seen in Figure \ref{converfig:a}--\ref{converfig:b} that for the finite element pairs $(\bm{V}_{h,3,D}, Q_{h,2})$ and $(\bm{V}_{h,3,D}^-, Q_{h,1})$, the convergence rates of both the velocity in the energy norm and the pressure in the $L^2$ norm are close to 2, while these rates are 3 for the finite element pairs $(\bm{V}_{h,4,D}, Q_{h,3})$ and $(\bm{V}_{h,4,D}^-, Q_{h,2})$. In the interim, Figure \ref{converfig:c} shows that the convergence rates of the velocity in the $L^2$-norm for these methods are close to 3 and 4 when $k$ is 2 and 3, respectively. This is consistent with the the estimates \eqref{gj2}.

The number of total DoFs of these finite element pairs is presented in Figure \ref{converfig:d}. In contrast with $(\bm{V}_{h,k+1,D}, Q_{h,k})$, the reduced finite element spaces $(\bm{V}_{h,k+1,D}^-, Q_{h,k-1})$ have fewer DoFs while retaining the same convergence rates. 

%


\bibliographystyle{spmpsci}      
\bibliography{bibfile}


\end{document}